\lstdefinelanguage{magma}
{mathescape}
\newtheorem{thm}{Theorem}
\newtheorem{prop}[thm]{Proposition}
\newtheorem{lem}[thm]{Lemma}
\theoremstyle{remark}
\newtheorem{rem}[thm]{Remark}
\theoremstyle{definition}
\newtheorem{exa}[thm]{Example}
\newtheorem{dfn}[thm]{Definition}
\newcommand{\eD}{\EuScript{D}}
\newcommand{\teD}{\tilde{\EuScript{D}}}
\newcommand{\FF}{\mathbb{F}}
\newcommand{\cC}{\mathcal{C}}
\newcommand{\cG}{\mathcal{G}}
\newcommand{\allone}{\mathbf{1}}
\newcommand{\allzero}{\mathbf{0}}
\newcommand{\tg}[1]{T_{#1}}
\newcommand{\tgc}[1]{C(T_{#1})}
\newcommand{\ttgc}[1]{\hat{C}(T_{#1})}
\DeclareMathOperator{\Aut}{Aut}
\DeclareMathOperator{\wt}{wt}
\DeclareMathOperator{\we}{we}
\DeclareMathOperator{\support}{supp}
\DeclareMathOperator{\Rad}{Rad}
\DeclareMathOperator{\rad}{rad}
\DeclareMathOperator{\GL}{GL}
\DeclareMathOperator{\RM}{RM}
\DeclareMathOperator{\id}{id}
\DeclareMathOperator{\Hom}{Hom}
\title{On triply even binary codes}
\author[K. Betsumiya]{Koichi Betsumiya}
\address{Graduate School of Science and Technology,
Hirosaki University, Hirosaki, 036-8561 Japan}
\email{betsumi@cc.hirosaki-u.ac.jp}
\author[A. Munemasa]{Akihiro Munemasa}
\address{Graduate School of Information Sciences,
Tohoku University, Sendai, 980-8579 Japan}
\email{munemasa@math.is.tohoku.ac.jp}
\subjclass[2000]{05E20, 05E30, 94B05}
\keywords{binary code, divisible code, doubly even code, triangular graph,
framed vertex operator algebra}
\date{August 3, 2011}
\begin{document}

\begin{abstract}
A triply even code is a binary linear code in which the weight of
every codeword is divisible by $8$. We show how two doubly even
codes of lengths $m_1$ and $m_2$ can be combined to make a
triply even code of length $m_1+m_2$, and then prove that
every maximal triply even code of length $48$ can be obtained
by combining two doubly even codes of length $24$ in a certain way.
Using this result, we show that there are exactly $10$ maximal
triply even codes of length $48$ up to equivalence.
\end{abstract}

\maketitle

\section{Introduction}

For the past few decades, extensive research of doubly even binary linear
codes has been done.
These codes turned out to be connected with objects in various areas,
for example, sphere packing problem,
combinatorial designs, finite groups, integral lattices, modular forms and so on
\cite{SPLAG,Rains-Sloane}.
In this paper, we are concerned with a subclass of the class of
doubly even codes, called triply even binary codes.
A triply even code is a binary linear code
in which every codeword has weight divisible by $8$,
in other words, a binary divisible code of level $3$
in the sense of \cite{Liu2006}.
Dong, Griess and H\"ohn \cite{DGH} pointed out that a certain
triply even binary code of length $48$ arose naturally from a Virasoro frame of
the moonshine vertex operator algebra $V^\natural$. 
Subsequently, Miyamoto \cite{M} found a construction method of 
$V^\natural$ from that code.
Lam and Yamauchi \cite{LY} formulated
this construction for the class of framed
vertex operator algebras.
To be precise, a holomorphic framed vertex operator algebra
of central charge $n$ is constructed from
a triply even code of length $2n$ whose dual is even.
Unlike doubly even codes, the classification of
all triply even codes of modest lengths has not been
established yet.

The purpose of this paper is to develop a basic theory of
maximal triply even codes, and to give
a classification of maximal triply even codes of length $48$.
Since any triply even code of length up to $48$ can be
regarded as a subcode of some
maximal triply even codes of length $48$, one can derive
easily the classification of all triply even codes of lengths up to $48$.
It turns out that every maximal triply even code of length
$n$ with $n\equiv0\pmod{8}$ and $n\leq40$
is obtained as the generalized doubling $\teD(C)$
of a maximal doubly even code $C$ (see Definition~\ref{dfn:stdbwr}),
and $n=48$ is the smallest length with $n\equiv0\pmod{8}$
for which there exists
a maximal triply even code not equivalent to $\teD(C)$ for any
doubly even self-dual code $C$.
The unique maximal triply even code $\ttgc{10}$
of length $48$ not equivalent
to generalized doublings is obtained by augmenting the code $\tgc{10}$ of length $45$
generated by the adjacency matrix of the triangular graph $\tg{10}$.

A principal consequence of our classification is that
every structure code of the moonshine vertex operator
algebra lies in the generalized doubling $\teD(C)$ of a 
doubly even self-dual code $C$ of length $24$. This 
follows from \cite[Proof of Lemma~5.5]{DGH}, together with the
fact that $\ttgc{10}^\perp$ has minimum weight $2$.
Moreover, our result constituted a foothold for
further developments toward the classification of
holomorphic framed vertex operator
algebras of central charge $24$. 
The headway of the developments is as follows.
By Lam and Yamauchi \cite{LY}, every triply even code
of length a multiple of $16$ containing the all-ones vector
is the structure code of some holomorphic framed vertex operator algebra.
So it is natural to ask whether there are any new holomorphic
framed vertex operator algebras of central charge $24$
having one of the codes we found as the structure code.
Lam \cite{Lam} recently constructed ten new holomorphic
vertex operator algebras of central charge $24$
using subcodes of $\ttgc{10}$.
Subsequently, Lam and Shimakura \cite{LS} constructed seven new
holomorphic vertex operator algebras of central charge $24$
using subcodes of $\teD(d_{16}^{+}\oplus e_8)$ and $\teD(e_8^{\oplus3})$.
They also obtained 
a complete classification of
the Lie algebra structures of holomorphic framed vertex operator
algebras of central charge $24$.
These holomorphic framed vertex operator algebras
correspond to some of the conformal field theories predicted
to exist by Schellekens \cite{Sch}.

This paper is organized as follows.
In Section~\ref{sec:construction}, properties and some construction
methods of
triply even codes are given.
In Section~\ref{sec:maximality}, we prove that some maximal
triply even codes can be constructed from doubly even self-dual codes
by the doubling process.
In Section~\ref{sec:triangular_graph}, an infinite series of maximal triply even
codes is constructed by triangular graphs and some properties
of the codes in this class are given.
In Section~\ref{sec:gendoubling}, a method for constructing a triply
even code from a pair of doubly even codes is given.
The main result in Section~\ref{sec:gendoubling} states that every
maximal triply even code is obtained from a pair of doubly even codes
containing their radicals. 
In Section~\ref{sec:algorithms}, an efficient method is described 
for determining
whether a given doubly even code contains its radical.
In Section~\ref{sec:classificationb},
we show that the method described in Section~\ref{sec:gendoubling}
gives all maximal triply even codes of length $48$ and, as a result,
a classification of maximal triply
even codes of length $48$ is given.
In Section~\ref{sec:classificationc}, a classification of maximal triply
even codes of lengths $8$, $16$, $24$, $32$ and $40$ is given.
The result is available electronically
from \cite{BD}, as well as
a complete program in {\sc Magma} \cite{MAGMA}
needed to produce it.

\section{Basic constructions for triply even codes}\label{sec:construction}

Throughout the paper, a code will mean a binary linear code,
or equivalently, a linear subspace of the vector space $\FF_2^n$ over the
field $\FF_2$ of two elements.
The support of a vector $u=(u_1,\dots,u_n)\in\FF_2^n$ is the set
$\support(u)=\{i\mid u_i=1\}$,
and the weight of $u$ is $\wt(u)=|\support(u)|$.
A {\em triply even} code is a code
in which every codeword has weight divisible by $8$.
A {\em doubly even} code is a code
in which every codeword has weight divisible by $4$.

In this section, we give basic properties of triply even codes,
and construction methods of triply even codes
from doubly even codes.
An $[n,k]$ code is a code $C\subset\FF_2^n$ with $\dim C=k$,
and $n$ is called the length of $C$.
For codes $C$ and $D$ of length $n$, $C$ is {\em equivalent} to $D$
if $C = D^\sigma$ for some coordinate permutation $\sigma\in S_n$.
The {\em automorphism group} $\Aut(C)$ of $C$ is defined as
$\{\sigma\in S_n\mid C = C^{\sigma}\}$.
The linear span of a subset $S\subset\FF_2^n$ over $\FF_2$
is denoted by $\langle S\rangle$.
For $u,v\in\FF_2^n$, we define $u\ast v$ to be
the vector in $\FF_2^n$ with
$\support(u\ast v)=\support(u)\cap\support(v)$.
For $C, D\subset\FF_2^n$, we define
$C\ast D :=\langle u\ast v\mid u\in C, v\in D\rangle$.
For vectors $u\in\FF_2^m$ and $v\in\FF_2^n$,
we denote by $(u\mid v)\in\FF_2^{m+n}$ the vector
obtained by concatenating $u$ and $v$.
For subsets $C\subset\FF_2^{n_1}$, $D\subset\FF_2^{n_2}$,
we define the {\em direct sum} of $C$ and $D$ as
\[
 C\oplus D =
 \{(u\mid v)\in \FF_2^{n_1+n_2} \mid u\in C,\; v\in D\}.
\]
If $C$ and $C'$ (resp.\ $D$ and $D'$) are codes of length $n_1$ (resp.\ $n_2$)
then $(C\oplus D)\ast(C'\oplus D')=(C\ast C')\oplus(D\ast D')$.
A code $C$ is said to be {\em decomposable} if it is a direct sum of two codes.
We denote by $\allone_n\in\FF_2^n$ and $\allzero_n\in\FF_2^n$,
the all-ones vector, the zero vector, respectively.
We will omit the subscript if there is no confusion.

For vectors $u=(u_1,\dots,u_n),v=(v_1,\dots,v_n)\in\FF_2^n$, we
denote by $u\cdot v$ the standard inner product $\sum_{i=1}^n u_iv_i$.
The dual code of a code $C$ is defined as
$\{u\in\FF_2^n\mid u\cdot v = 0 \text{ for any } v\in C\}$
and is denoted by $C^\perp$.
A code $C$ is {\em self-dual} (resp.\ {\em self-orthogonal})
if $C=C^\perp$ (resp.\ $C\subset C^\perp$).
There exists a doubly even self-dual code of length $n$,
if and only if $n$ is divisible by  $8$.
If $C$ and $D$ are codes, then $(C\oplus D)^\perp = C^\perp \oplus
D^\perp$.

The following lemma is a special case of \cite[Theorem 5.3]{Ward6}
(see also {\cite[Proposition~2.1]{Liu}}).
\begin{lem}\label{lem:gencond}
Let $C=\langle S \rangle$ be a code generated by
a set $S$.
Then $C$ is a triply even code if and only if
the following conditions hold for any $u, v, w\in S$:
\begin{align}
 \wt(u) & \equiv 0 \pmod{8},\label{cnd:gencondi}\\
\wt(u\ast v) & \equiv 0 \pmod{4},\label{cnd:gencondii} \\
\wt(u\ast v \ast w) & \equiv 0 \pmod{2}.\label{cnd:gencondiii}
\end{align}
\end{lem}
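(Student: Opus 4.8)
The plan is to reduce the whole statement to a single inclusion--exclusion identity that expresses the weight of a sum of generators in terms of the weights of their $\ast$-products. Since $C=\langle S\rangle$ and repeated summands cancel over $\FF_2$, every codeword can be written as $c=u_1+\cdots+u_k$ with distinct $u_i\in S$. For a nonempty subset $T=\{j_1,\dots,j_t\}\subseteq\{1,\dots,k\}$ write $w_T:=\wt(u_{j_1}\ast\cdots\ast u_{j_t})$. The key formula I would establish is
\[
\wt(u_1+\cdots+u_k)=\sum_{\emptyset\neq T\subseteq\{1,\dots,k\}}(-2)^{|T|-1}\,w_T.
\]
Once this is in hand, the lemma follows by bookkeeping on the $2$-adic valuations of the coefficients $(-2)^{|T|-1}$.

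To prove the identity I would argue coordinate by coordinate. Fix a coordinate $i$ and let $s$ be the number of indices $j$ with $(u_j)_i=1$; this coordinate contributes $1$ to the left-hand side exactly when $s$ is odd. On the right-hand side it contributes to $w_T$ precisely for the nonempty subsets $T$ of those $s$ indices, so its total contribution there is
\[
\sum_{t=1}^{s}\binom{s}{t}(-2)^{t-1}
=-\tfrac12\bigl((1-2)^s-1\bigr)
=\tfrac12\bigl(1-(-1)^s\bigr),
\]
which equals $1$ when $s$ is odd and $0$ when $s$ is even. Summing over all coordinates gives the identity.

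For the forward direction, assume $C$ is triply even. Condition \eqref{cnd:gencondi} is immediate since $S\subseteq C$. Applying the identity to $u+v$ gives $2\wt(u\ast v)=\wt(u)+\wt(v)-\wt(u+v)\equiv0\pmod{8}$, which is \eqref{cnd:gencondii}. Applying it to $u+v+w$ expresses $4\wt(u\ast v\ast w)$ as a signed combination of $\wt(u+v+w)$, the single weights $\wt(u),\wt(v),\wt(w)$, and the pairwise terms $2\wt(u\ast v),2\wt(u\ast w),2\wt(v\ast w)$; by \eqref{cnd:gencondi} and the just-established \eqref{cnd:gencondii} this is $\equiv0\pmod{8}$, giving \eqref{cnd:gencondiii}.

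For the converse, assume \eqref{cnd:gencondi}--\eqref{cnd:gencondiii} and evaluate the identity on an arbitrary codeword $c=u_1+\cdots+u_k$. The terms with $|T|=1,2,3$ carry coefficients $(-2)^{|T|-1}\in\{1,-2,4\}$, and are $\equiv0\pmod 8$ by \eqref{cnd:gencondi}, \eqref{cnd:gencondii}, \eqref{cnd:gencondiii} respectively. The crucial point---and essentially the only place where anything happens---is that every term with $|T|\geq4$ already carries the factor $(-2)^{|T|-1}$, which is divisible by $8$, so those terms vanish modulo $8$ \emph{regardless} of the weights $w_T$. Hence $\wt(c)\equiv0\pmod 8$ for every codeword, and $C$ is triply even. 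I do not expect a serious obstacle here: the content of the lemma is exactly this truncation, namely that controlling the weights of $\ast$-products of at most three generators suffices because higher products are annihilated by the power of $2$ in the coefficient.
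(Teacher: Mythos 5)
Your proof is correct. Note first that the paper itself offers no argument for this lemma: it is stated as a special case of Ward's weight polarization theorem \cite[Theorem~5.3]{Ward6} (see also \cite[Proposition~2.1]{Liu}), so your write-up is a self-contained substitute for an external citation rather than a variant of an in-paper proof. Your route---the signed inclusion--exclusion identity
\[
\wt(u_1+\cdots+u_k)=\sum_{\emptyset\neq T\subseteq\{1,\dots,k\}}(-2)^{|T|-1}\,\wt\Bigl(\mathop{\ast}_{j\in T}u_j\Bigr),
\]
verified coordinatewise, followed by reading off $2$-adic valuations of the coefficients---is in essence the standard direct proof of Ward's result in the binary, level-$3$ case, and every step checks out: the coordinate computation $\sum_{t=1}^{s}\binom{s}{t}(-2)^{t-1}=\tfrac12(1-(-1)^s)$ is right, the forward direction correctly peels off conditions \eqref{cnd:gencondii} and \eqref{cnd:gencondiii} from the $k=2$ and $k=3$ instances (the cases with repeated generators among $u,v,w$ reduce trivially to lower-order conditions, which you could mention in passing), and the converse hinges precisely on the observation that $(-2)^{|T|-1}\equiv0\pmod 8$ for $|T|\geq4$, which is the genuine content of the lemma. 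What your approach buys is transparency and a visible generalization (the same truncation argument gives the analogous criterion for divisibility level $r$, with $j$-fold products weighted modulo $2^{r+1-j}$); what the paper's citation buys is access to Ward's broader polarization machinery without reproving it. Since the blind proof and the cited theorem rest on the same identity, there is no mathematical divergence---only a difference between proving and quoting.
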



\begin{dfn}\label{dfn:rad}
Let $C$ be a
doubly even
code of length $n$.
We define functions
\begin{align*}
Q:  C  &\longrightarrow \FF_2, &
 u&\mapsto \frac{\wt(u)}{4} \bmod{2}, \\
B:  C\times C^\perp & \longrightarrow \FF_2, &
(v, u) &\mapsto \frac{\wt(v\ast u)}{2} \bmod{2}, \\
T:  \FF_2^n\times\FF_2^n\times\FF_2^n  &\longrightarrow \FF_2, &
(u, v, w)&\mapsto \wt(u\ast v\ast w) \bmod{2}.
\end{align*}
Clearly, the following equalities hold:
\begin{align}
Q(x+y)&=Q(x)+Q(y)+B(x,y)\quad
(x,y\in C),\label{eq:QB}\\
B(x,y+z)&=B(x,y)+B(x,z)+T(x,y,z)\quad
(x\in C,\;y,z\in C^\perp),\label{eq:BT1}\\
B(x+y,z)&=B(x,z)+B(y,z)+T(x,y,z)\quad
(x,y\in C,\;z\in C^\perp),\label{eq:BT2}\\
T(x,y,z)&=0\quad
(x,y\in C, z\in(C\ast C)^\perp).\label{eq:Tzero}
\end{align}

The \emph{doubly even radical} $\rad C$,
and the \emph{triply even radical}
$\Rad C$ are defined as
\begin{align*}
\rad C & =\{y\in C^\perp\mid B(x,y)=0\;(\forall x\in C)\}, \\
\Rad C & =\{x\in \rad C \mid Q(x)=0\}.
\end{align*}
Clearly
\begin{align}
\rad(C\oplus D)&=\rad C \oplus \rad D \label{eq:radoplus},\\
\Rad(C\oplus D)&\supset\Rad C \oplus \Rad D \label{eq:Radoplus}
\end{align}
hold.
\end{dfn}

In general, the radicals $\rad C$, $\Rad C$
are not linear and not necessarily contained in $C$,
even if $C$ is triply even.
An example is $C=\langle\allone_8\rangle\oplus\langle\allone_8\rangle$.
However, the following holds.

\begin{lem}\label{lem:R1}
Let $C$ be a doubly even code.
Then $\rad C\subset (C\ast C)^\perp$.
\end{lem}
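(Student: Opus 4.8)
The plan is to reduce the asserted containment to a statement about the trilinear form $T$ of Definition~\ref{dfn:rad} and then to read off the polarization identity~\eqref{eq:BT2}. Since $C\ast C$ is by definition the span $\langle u\ast v\mid u,v\in C\rangle$, a vector $y$ lies in $(C\ast C)^\perp$ precisely when $y\cdot(u\ast v)=0$ for all $u,v\in C$. The first observation I would make is that this inner product is exactly $T$: indeed
\[
y\cdot(u\ast v)=\sum_{i} y_i u_i v_i=\wt(u\ast v\ast y)\bmod 2=T(u,v,y),
\]
using that $\support(u\ast v\ast y)$ is the set of coordinates where $u$, $v$ and $y$ simultaneously equal $1$. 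Thus the lemma is equivalent to the vanishing of $T(u,v,y)$ for every $u,v\in C$ and every $y\in\rad C$.

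To establish this vanishing I would fix $y\in\rad C$, noting $y\in C^\perp$ so that the forms below are defined, and apply identity~\eqref{eq:BT2} with its first two ($C$-)arguments taken to be $u$ and $v$ and its $C^\perp$-argument taken to be $y$. This gives
\[
T(u,v,y)=B(u+v,y)+B(u,y)+B(v,y).
\]
Because $C$ is linear, all three of $u$, $v$ and $u+v$ lie in $C$, and the defining property of $\rad C$ is precisely that $B(x,y)=0$ for every $x\in C$. Hence each term on the right-hand side vanishes, so $T(u,v,y)=0$, and therefore $y\in(C\ast C)^\perp$.

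There is essentially no serious obstacle: the whole argument is the recognition of $y\cdot(u\ast v)$ as the symmetric form $T(u,v,y)$ followed by a single application of~\eqref{eq:BT2}. The only point that requires care is the linearity of $C$, which is what guarantees $u+v\in C$ so that the term $B(u+v,y)$ is both defined and annihilated by membership in the radical; getting the order of arguments in~\eqref{eq:BT2} right (two slots from $C$, one from $C^\perp$) is what makes the three $B$-terms collapse simultaneously.
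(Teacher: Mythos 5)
Your proof is correct and is essentially identical to the paper's: both identify $y\cdot(u\ast v)$ with $T(u,v,y)$ so that $(C\ast C)^\perp$ becomes the vanishing locus of $T$ on $C\times C\times C^\perp$, and both then apply the polarization identity~(\ref{eq:BT2}) together with $u,v,u+v\in C$ and the definition of $\rad C$ to kill all three $B$-terms. Your added remark that $y\in C^\perp$ (so that $B(\cdot,y)$ is defined) is a point the paper leaves implicit, but it introduces no difference in substance.
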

\begin{proof}
We note that
$(C\ast C)^\perp=\{z\in C^\perp\mid T(x,y,z)=0 \text{ for any }x ,y\in C\}$.
Suppose $x,y\in C$ and $z\in\rad C$.
Since $x+y\in C$,
we have $T(x,y,z)=0$ by (\ref{eq:BT2}).
Thus $z\in(C\ast C)^\perp$, and
the result follows.
\end{proof}

\begin{lem}\label{lem:R0}
Let $C$ be a doubly even code, and suppose $x,y\in\rad C$.
\begin{enumerate}
\item If $y\in C$, then $x+y\in\rad C$.\label{cd:R0_1}
\item If $x+y\in C$, then $x+y\in\rad C$.\label{cd:R0_2}
\end{enumerate}
\end{lem}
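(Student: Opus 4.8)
The plan is to check both claims directly against the definition of the doubly even radical: proving $x+y\in\rad C$ amounts to verifying that $x+y\in C^\perp$ and that $B(w,x+y)=0$ for every $w\in C$. The membership $x+y\in C^\perp$ is immediate in both parts, since $C^\perp$ is a subspace and $x,y\in\rad C\subset C^\perp$. So the entire content is the vanishing of $B(w,x+y)$.

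The common tool I would isolate first is the following observation, which packages Lemma~\ref{lem:R1} together with the identity $(C\ast C)^\perp=\{z\in C^\perp\mid T(a,b,z)=0\ \text{for all}\ a,b\in C\}$ recorded in its proof: \emph{if $w\in C$ and $x\in\rad C$, then $T(w,x,c)=0$ for every $c\in C$.} Indeed, $x\in\rad C\subset(C\ast C)^\perp$ forces $T(a,b,x)=0$ for all $a,b\in C$, and since $T$ is symmetric in its three arguments (clear from $\support(u\ast v\ast w)=\support(u)\cap\support(v)\cap\support(w)$), setting $a=w$, $b=c$ gives $T(w,x,c)=T(w,c,x)=0$.

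For (i), I would apply \eqref{eq:BT1} to obtain $B(w,x+y)=B(w,x)+B(w,y)+T(w,x,y)$; the first two summands vanish because $x,y\in\rad C$, leaving $B(w,x+y)=T(w,x,y)$. As $y\in C$, the observation with $c=y$ yields $T(w,x,y)=0$, whence $B(w,x+y)=0$ and $x+y\in\rad C$.

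Part (ii) is the one I expect to be the real obstacle, and it requires a small twist: here $x$ and $y$ need not lie in $C$ individually, so $T(w,x,y)$ is not directly reachable by the observation, and \eqref{eq:Tzero} cannot be invoked in the naive way that settled (i). The only structural fact available is $x+y\in C$, so I would feed $c=x+y$ into the observation to get $T(w,x,x+y)=0$, and then reconstruct $B(w,x+y)$ from it. Expanding $T(w,x,x+y)$ by \eqref{eq:BT1} (legitimate since $w\in C$ and $x,\,x+y\in C^\perp$) gives $T(w,x,x+y)=B(w,x+(x+y))+B(w,x)+B(w,x+y)=B(w,y)+B(w,x)+B(w,x+y)$, using $x+(x+y)=y$ over $\FF_2$. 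Since $B(w,x)=B(w,y)=0$, this collapses to $T(w,x,x+y)=B(w,x+y)$, and comparing with $T(w,x,x+y)=0$ gives $B(w,x+y)=0$ for all $w\in C$, as required. The bookkeeping to watch is exactly this reindexing of the slots of $T$ and the sign-free $\FF_2$ cancellation $x+(x+y)=y$.
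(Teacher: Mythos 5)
Your proof is correct and takes essentially the same route as the paper's: both parts reduce to showing $B(w,x+y)=0$ via (\ref{eq:BT1}), with Lemma~\ref{lem:R1} placing $x\in(C\ast C)^\perp$ so that (\ref{eq:Tzero}) kills the relevant triple term once the hypothesis $y\in C$ (resp.\ $x+y\in C$) supplies the needed slot of $T$. In part (ii) the paper writes $T(x,y,z)=T(x,x+y,z)+T(x,x,z)=0$ using trilinearity of $T$, while you reach the identical cancellation by expanding $T(w,x,x+y)$ through (\ref{eq:BT1}); the two computations are trivially equivalent rearrangements.
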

\begin{proof}
Observe that, by Lemma~\ref{lem:R1}, $x\in(C\ast C)^\perp$ holds.
For any $z\in C$, we have
$B(z,x+y)=T(x,y,z)$ by (\ref{eq:BT1}).
If $y\in C$, then $T(x,y,z)=0$. Thus \ref{cd:R0_1} holds.
If $x+y\in C$, then 
$T(x,y,z)=T(x,x+y,z)+T(x,x,z)=0$.
Thus \ref{cd:R0_2} holds.
\end{proof}

\begin{lem}\label{lem:RC}
Let $C$ be a doubly even code, and suppose $x\in\rad C$ and $z\in\Rad C$.
Then
\begin{align}
x+C\cap\rad C&=(x+C\cap(C\ast C)^\perp)\cap\rad C,\label{eq:RC1}\\
z+C\cap\Rad C&=(z+C\cap(C\ast C)^\perp)\cap\Rad C.\label{eq:RC2}
\end{align}
\end{lem}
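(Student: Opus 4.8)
The plan is to prove each identity by double inclusion, letting Lemmas~\ref{lem:R1} and~\ref{lem:R0} carry all the arithmetic of the radicals. I read the left-hand sides as the cosets $x+(C\cap\rad C)$ and $z+(C\cap\Rad C)$, and the right-hand sides as $(x+(C\cap(C\ast C)^\perp))\cap\rad C$ and $(z+(C\cap(C\ast C)^\perp))\cap\Rad C$. The inclusion $C\cap\rad C\subset C\cap(C\ast C)^\perp$, immediate from $\rad C\subset(C\ast C)^\perp$ (Lemma~\ref{lem:R1}), is what makes the forward containments go through.

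For \eqref{eq:RC1} I would argue as follows. Given $x+c$ in the left-hand side with $c\in C\cap\rad C$, Lemma~\ref{lem:R1} places $c$ in $(C\ast C)^\perp$, so $x+c\in x+(C\cap(C\ast C)^\perp)$, while $x,c\in\rad C$ together with $c\in C$ give $x+c\in\rad C$ by Lemma~\ref{lem:R0}\ref{cd:R0_1}; hence $x+c$ lies in the right-hand side. Conversely, if $w=x+c$ belongs to the right-hand side, then $c\in C\cap(C\ast C)^\perp$ and $w\in\rad C$; writing $c=x+w$ with $x,w\in\rad C$ and $c\in C$, Lemma~\ref{lem:R0}\ref{cd:R0_2} forces $c\in\rad C$, so $c\in C\cap\rad C$ and $w$ lies in the left-hand side.

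Equation \eqref{eq:RC2} has exactly the same shape, the one extra ingredient being the control of $Q$. The computation I would isolate is that, for $z\in\rad C$ and $c\in C$, the weight identity $\wt(z+c)=\wt(z)+\wt(c)-2\wt(z\ast c)$, divided by $4$ and reduced modulo $2$, becomes $Q(z+c)=Q(z)+Q(c)+B(c,z)$, precisely the analogue of \eqref{eq:QB}; and since $z\in\rad C$ gives $B(c,z)=0$, this collapses to $Q(z+c)=Q(z)+Q(c)$. Granting this, the forward inclusion proceeds as before (using Lemmas~\ref{lem:R1} and~\ref{lem:R0}\ref{cd:R0_1} to land $z+c$ in $(C\ast C)^\perp$ and in $\rad C$), and $Q(z+c)=Q(z)+Q(c)=0$ puts $z+c$ in $\Rad C$. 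For the reverse inclusion, from $w=z+c\in\Rad C$ with $c\in C\cap(C\ast C)^\perp$ I get $c\in\rad C$ by Lemma~\ref{lem:R0}\ref{cd:R0_2}, and then $0=Q(w)=Q(z)+Q(c)$ yields $Q(c)=0$, so $c\in C\cap\Rad C$.

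The step that will need the most care is this last $Q$-computation, since $Q$ in Definition~\ref{dfn:rad} is only declared on $C$, whereas I must evaluate it at $z$ and $z+c$, which lie in $\rad C\subset C^\perp$ rather than in $C$. I would make explicit that $Q$ is understood as $\wt(\cdot)/4\bmod 2$ on doubly even vectors, and verify that the weights entering the identity---those of $z$, $c$, $z\ast c$, and $z+c$---are divisible by the powers of $2$ needed for every term to be defined, so that the displayed relation is legitimate and reduces to \eqref{eq:QB} in the special case $z\in C$. Once that is in place, all remaining manipulations are routine applications of Lemma~\ref{lem:R0}.
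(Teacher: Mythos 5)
Your proof is correct and takes essentially the same route as the paper: \eqref{eq:RC1} is established by the identical double inclusion via Lemma~\ref{lem:R1} and Lemma~\ref{lem:R0}\ref{cd:R0_1}--\ref{cd:R0_2}, and your additivity $Q(z+c)=Q(z)+Q(c)$ for $z\in\Rad C$, $c\in C\cap\rad C$ is exactly the weight computation the paper expresses as ``since $\wt(z)\equiv0\pmod{8}$, $z+y\in\Rad C$ if and only if $\wt(y)\equiv0\pmod{8}$.'' The only cosmetic differences are that the paper obtains \eqref{eq:RC2} by intersecting \eqref{eq:RC1} with $\Rad C$ instead of redoing the double inclusion, and it sidesteps your (legitimately flagged) domain-of-$Q$ issue by phrasing that step directly in terms of weights modulo $8$.
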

\begin{proof}
The containment
$x+C\cap\rad C\subset (x+C\cap(C\ast C)^\perp)\cap\rad C$
follows from Lemma~\ref{lem:R1} and Lemma~\ref{lem:R0}\ref{cd:R0_1}.
As for the reverse containment,
suppose $y\in C\cap(C\ast C)^\perp$ and $x+y\in\rad C$.
Since $x\in\rad C$,
we have $y\in\rad C$ by Lemma~\ref{lem:R0}\ref{cd:R0_2}.
Thus $x+y\in x+C\cap\rad C$ and (\ref{eq:RC1}) holds.

From (\ref{eq:RC1}),
\[
 (z+C\cap(C\ast C)^\perp)\cap\Rad C = (z+C\cap\rad C)\cap\Rad C.
\]
Suppose $y\in C\cap\rad C$.
Since $\wt(z)\equiv 0\pmod{8}$, $z+y\in\Rad C$ if and only if
$\wt(y)\equiv 0\pmod{8}$. Therefore
\[
 (z+C\cap(C\ast C)^\perp)\cap\Rad C = z+C\cap\Rad C.
\]
Thus \ref{cd:R0_2} holds.
\end{proof}

\begin{lem}\label{lem:RL}
Let $C$ be a doubly even code and $D=(C\ast C)^\perp\cap C$.
Then the restriction $B|_{C\times D}$ of $B$
to $C\times D$ is a bilinear pairing and $Q|_D$ is a quadratic form
with associated bilinear form $B|_{D\times D}$.
Moreover, $C\cap\rad C$ and $C\cap\Rad C$ are linear subcodes of $C$.
In particular, if $\rad C\subset C$ (resp.\ $\Rad C\subset C$), then
 $\rad C$ (resp.\ $\Rad C$) is linear.
\end{lem}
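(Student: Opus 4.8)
The plan is to read off every assertion from the four identities (\ref{eq:QB})--(\ref{eq:Tzero}) together with Lemmas~\ref{lem:R1} and~\ref{lem:R0}, since these already package the non-trivial combinatorial content. First I would record two routine preliminaries. Because $C$ is doubly even it is self-orthogonal, so $D=(C\ast C)^\perp\cap C\subseteq C\subseteq C^\perp$, and therefore $B$ is defined on all of $C\times D$; moreover $D$ is the intersection of the two subspaces $(C\ast C)^\perp$ and $C$, hence is itself a linear subspace, which is what lets us even speak of bilinearity.

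Next I would verify that $B|_{C\times D}$ is bilinear. For additivity in the first slot take $x,y\in C$ and $z\in D$; by (\ref{eq:BT2}) we have $B(x+y,z)=B(x,z)+B(y,z)+T(x,y,z)$, and since $z\in(C\ast C)^\perp$ the cubic term $T(x,y,z)$ vanishes by (\ref{eq:Tzero}). For additivity in the second slot take $x\in C$ and $y,z\in D$; by (\ref{eq:BT1}) we have $B(x,y+z)=B(x,y)+B(x,z)+T(x,y,z)$, and again $T(x,y,z)=0$ by (\ref{eq:Tzero}), this time because $x,y\in C$ (using $y\in D\subseteq C$) while $z\in(C\ast C)^\perp$. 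The only point requiring care is bookkeeping of which argument plays which role: the definition of $D$ is arranged precisely so that its elements lie simultaneously in $C$ and in $(C\ast C)^\perp$, hence can fill either of the two $C$-slots or the $(C\ast C)^\perp$-slot of $T$, so the cubic term drops out in both computations.

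For the quadratic-form claim I would restrict (\ref{eq:QB}) to $D$: for $x,y\in D\subseteq C$ it gives $Q(x+y)+Q(x)+Q(y)=B(x,y)$, so the polarization of $Q|_D$ is exactly $B|_{D\times D}$, which is bilinear by the previous paragraph; together with $Q(\allzero)=0$ this says that $Q|_D$ is a quadratic form with associated bilinear form $B|_{D\times D}$.

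Finally I would prove linearity of the two intersections. Each contains $\allzero$ and sits inside $\FF_2^n$, so over $\FF_2$ it suffices to check closure under addition. If $x,y\in C\cap\rad C$ then $x+y\in C$, and since $x,y\in\rad C$ with $y\in C$, Lemma~\ref{lem:R0}\ref{cd:R0_1} gives $x+y\in\rad C$; hence $C\cap\rad C$ is a subcode. For $C\cap\Rad C$ the same argument gives $x+y\in C\cap\rad C$, and it remains to check $Q(x+y)=0$: by (\ref{eq:QB}) we have $Q(x+y)=Q(x)+Q(y)+B(x,y)$, where $Q(x)=Q(y)=0$ because $x,y\in\Rad C$ and $B(x,y)=0$ because $x\in C$ and $y\in\rad C$; thus $x+y\in\Rad C$. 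The last assertion is then immediate, for if $\rad C\subseteq C$ then $\rad C=C\cap\rad C$ is linear, and likewise for $\Rad C$. I do not expect a genuine obstacle here; the only thing to stay vigilant about is the asymmetry of $B$, which is a pairing $C\times C^\perp\to\FF_2$ rather than a form on a single space, so that every invocation of (\ref{eq:BT1}), (\ref{eq:BT2}), (\ref{eq:Tzero}) and the definition of $\rad C$ must be applied with its arguments in the correct slots.
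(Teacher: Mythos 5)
Your proposal is correct and follows essentially the same route as the paper: bilinearity of $B|_{C\times D}$ via the vanishing of $T$ from (\ref{eq:Tzero}) substituted into (\ref{eq:BT1}) and (\ref{eq:BT2}) (with elements of $D$ filling either the $C$-slots or the $(C\ast C)^\perp$-slot), and the quadratic-form claim by restricting (\ref{eq:QB}) to $D$. The only immaterial divergence is at the end: the paper uses Lemma~\ref{lem:R1} to write $C\cap\rad C=\{y\in D\mid B(x,y)=0 \text{ for all } x\in C\}$, so linearity falls out because $B|_{C\times D}$ is linear in the second slot and $Q$ is then linear on this kernel, whereas you verify closure under addition directly via Lemma~\ref{lem:R0}\ref{cd:R0_1} together with the computation $Q(x+y)=Q(x)+Q(y)+B(x,y)=0$ --- the same mechanism made explicit, and equally valid.
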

\begin{proof}
First, note that
since $C\subset C\ast C$,
we have $D\subset (C\ast C)^\perp\subset C^\perp$.
For any $x, y\in C$ and $z\in D$, we have $T(x,y,z)=0$ by (\ref{eq:Tzero}),
hence
$B(x+y,z)= B(x,z)+B(y,z)$
by (\ref{eq:BT2}).
Also, for any $x\in C$ and $y, z\in D$, we have $T(x,y,z)=0$ by (\ref{eq:Tzero}),
hence
$B(x,y+z)= B(x,y)+B(x,z)$
by (\ref{eq:BT1}).
Therefore, $B$ is a bilinear pairing on $C\times D$, and
$Q|_D$ is a quadratic form with associated bilinear form
$B|_{D\times D}$ by (\ref{eq:QB}).

By Lemma~\ref{lem:R1},
$C\cap\rad C=\{y\in D\mid B(x,y)=0\;\text{ for any }x\in C\}$.
Since $B|_{C\times D}$ is linear in the second variable,
$C\cap\rad C$ is a linear subcode of $C$.

Also, by (\ref{eq:QB}), $Q$ is linear on $C\cap\rad C$.
Then,
$C\cap\Rad C =\{x\in C\cap\rad C\mid Q(x)=0\}$
is a linear subcode of $C$.
\end{proof}



\begin{dfn}\label{dfn:stdbwr}
Let $C$ be a code of length $n$ and set $R = C\cap \Rad C$.
We define the extended doubling $\eD(C)$ and the generalized
doubling $\teD(C)$ as
\begin{align}\label{eq:eDC}
\eD(C)&=\langle (\allone_n|\allzero_n),(\allzero_n|\allone_n),
\{(x|x)\mid x\in C\}\rangle,\\
\label{eq:teDC}
\teD(C)&=\langle R\oplus \allzero_n,\{(x|x)\mid x\in C\}\rangle.
\end{align}
\end{dfn}
We note that if $C$ is a doubly even code,
then $\teD(C)$ is a triply even code and
\begin{equation}\label{eq:dimteD}
\dim \teD(C)=\dim C + \dim (C\cap\Rad C).
\end{equation}
Note also that if $C$ is a doubly even $[n, d]$ code and
$n\equiv 0 \pmod{8}$,
then $\eD(C)$ is a triply even code of length $2n$,
dimension $d+1$ or $d+2$, depending on $\allone\in C$ or not.
In particular, if $C$ is a doubly even self-dual code of length $n$,
then $\eD(C)$ is a triply even $[2n, n+1]$ code.
This is a particularly important construction in connection with
framed vertex operator algebras and lattices (see \cite{HLM}).
In the next section, we give a sufficient condition for $C$ under
which $\teD(C)$ is a maximal triply even code.

\section{Maximality of triply even codes}\label{sec:maximality}
In this section, we discuss {\em maximal} triply even codes, that is,
triply even codes not contained in any larger triply even code.



\begin{lem}\label{lem:maxcondbyrad}
If $C$ is a triply even code, then $C\subset\Rad C$.
Moreover, equality holds
if and only if $C$ is a maximal triply even code.
\end{lem}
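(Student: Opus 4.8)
The plan is to reduce everything to the elementary weight identity $\wt(u+v)=\wt(u)+\wt(v)-2\wt(u\ast v)$ and to track divisibility by $8$. Throughout I use that a triply even code is in particular doubly even, so that the functions $Q$, $B$ and the radicals of Definition~\ref{dfn:rad} are defined, and that a doubly even code is self-orthogonal (again a consequence of the identity read modulo $4$), so that $C\subset C^\perp$. I also record the reformulation that, for $x\in\rad C$, the condition $Q(x)=0$ in the definition of $\Rad C$ amounts to $\wt(x)\equiv0\pmod{8}$, since the formula $Q(x)=\wt(x)/4\bmod 2$ both requires $\wt(x)\equiv0\pmod4$ and vanishes exactly when $\wt(x)\equiv0\pmod8$.

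First I would prove the inclusion $C\subset\Rad C$. Fix $x\in C$. Then $x\in C^\perp$ by self-orthogonality, and for every $z\in C$ the identity gives $2\wt(z\ast x)=\wt(z)+\wt(x)-\wt(z+x)\equiv0\pmod 8$, whence $\wt(z\ast x)\equiv0\pmod4$ and $B(z,x)=0$; thus $x\in\rad C$. Since $\wt(x)\equiv0\pmod8$ we also have $Q(x)=0$, so $x\in\Rad C$.

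For the equivalence I would argue each direction by contraposition, the same congruence computation serving as the engine in both. \emph{(Maximality $\Rightarrow$ equality.)} Suppose $C\subsetneq\Rad C$ and pick $x\in\Rad C\setminus C$. I claim $\langle C,x\rangle=C\cup(x+C)$ is triply even, contradicting maximality. Its codewords have the form $c$ or $x+c$ with $c\in C$; for the latter $\wt(x+c)=\wt(x)+\wt(c)-2\wt(x\ast c)$, where $\wt(x)\equiv0\pmod8$ because $x\in\Rad C$, $\wt(c)\equiv0\pmod8$, and $2\wt(x\ast c)\equiv0\pmod8$ because $B(c,x)=0$ (as $x\in\rad C$) forces $\wt(x\ast c)\equiv0\pmod4$; hence $\wt(x+c)\equiv0\pmod8$. \emph{(Equality $\Rightarrow$ maximality.)} Conversely, if $C$ is not maximal, take a triply even $C'\supsetneq C$ and $x\in C'\setminus C$. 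Since $C\subset C'$ and $C'$ is self-orthogonal, $x\in C^\perp$; since $z,x\in C'$ are both triply even, the identity again gives $\wt(z\ast x)\equiv0\pmod4$, i.e.\ $B(z,x)=0$ for all $z\in C$, so $x\in\rad C$; and $\wt(x)\equiv0\pmod8$ gives $Q(x)=0$. Thus $x\in\Rad C\setminus C$ and $C\ne\Rad C$.

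The argument is essentially bookkeeping of congruences, so I expect no serious obstacle; the one point requiring care is the status of $Q$ on $\rad C$, whose elements need not lie in $C$, handled by reading $Q(x)=0$ as $\wt(x)\equiv0\pmod8$ as above. The conceptual content worth highlighting is that $\Rad C$ is precisely the set of vectors that may be adjoined to $C$ one at a time while preserving triple evenness: membership in $\rad C$ kills the cross term $2\wt(x\ast c)$ modulo $8$, while $Q(x)=0$ handles the diagonal term $\wt(x)$. This is what lets the maximality criterion fall out of the single coset computation $\langle C,x\rangle=C\cup(x+C)$ rather than the full triple-product test of Lemma~\ref{lem:gencond}.
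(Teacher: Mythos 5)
Your proof is correct, and it reaches the same pivot as the paper---the characterization that for $x\notin C$ the extension $\langle C,x\rangle$ is triply even exactly when $x\in\Rad C$---but by a more elementary, self-contained route. The paper derives both halves from machinery already in place: the inclusion $C\subset\Rad C$ is read off from Lemma~\ref{lem:gencond}, and applying Lemma~\ref{lem:gencond} to the generating set $C\cup\{x\}$ gives the extension criterion in the form $x\in(C\ast C)^\perp\cap\Rad C$, after which Lemma~\ref{lem:R1} ($\rad C\subset(C\ast C)^\perp$) collapses the intersection to $\Rad C$. You instead exploit the coset decomposition $\langle C,x\rangle=C\cup(x+C)$: since $C$ is already linear, every new codeword is $x+c$ with $c$ a \emph{full} codeword of $C$, so the exact identity $\wt(x+c)=\wt(x)+\wt(c)-2\wt(x\ast c)$ settles everything and the triple-intersection condition \eqref{cnd:gencondiii} of Lemma~\ref{lem:gencond} never enters; in effect your computation silently re-derives the consequence of Lemma~\ref{lem:R1} needed here, because membership in $\rad C$ is quantified over all of $C$ rather than over a generating set, and that kills the only cross term. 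The paper's version buys brevity and reuse of established lemmas; yours buys independence from Lemmas~\ref{lem:gencond} and~\ref{lem:R1}, plus an explicit treatment of a point the paper glosses: $Q$ is literally defined only on vectors of weight divisible by $4$, and your reading of $Q(x)=0$ for $x\in\rad C$ as $\wt(x)\equiv0\pmod{8}$ is precisely the convention the paper adopts later, in Section~\ref{sec:algorithms}, when it extends $Q$ to $\rad C$. Both contrapositive directions are sound, including the one where an arbitrary triply even $C'\supsetneq C$ supplies $x\in\Rad C\setminus C$ via the same weight identity.
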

\begin{proof}
The first part is immediate from Lemma~\ref{lem:gencond}.
For a vector $x$, Lemma~\ref{lem:gencond} implies that
$\langle C, x\rangle$ is a triply even code if and only if
$x\in (C\ast C)^\perp\cap \Rad C=\Rad C$ by Lemma~\ref{lem:R1}.
Thus the result follows.
\end{proof}

\begin{lem}\label{lem:indcomdesdii}
Let $C=\bigoplus_{i=1}^kC_i$ be a maximal doubly even code
where $C_i$ is an indecomposable component of length $n_i$ for $i=1,\ldots, k$.
Then
\begin{align}
\rad C = \bigoplus_{i=1}^k \langle\mathbf{s}_{i}\rangle,\label{eq:isdstar} \\
\Rad C = \bigoplus_{i=1}^k \langle\mathbf{t}_{i}\rangle.\label{eq:isdstariix}
\end{align}
where
\begin{align*}
\mathbf{s}_i =\begin{cases}\allone_{n_i} & n_i \equiv 0 \pmod{4} \\
	       \allzero_{n_i} & n_i \not\equiv 0 \pmod{4},\end{cases} \\
\mathbf{t}_i =\begin{cases}\allone_{n_i} & n_i \equiv 0 \pmod{8} \\
	       \allzero_{n_i} &  n_i \not\equiv 0 \pmod{8}.\end{cases}
\end{align*}
In particular, if $C$ is a doubly even self-dual code, then
\begin{align}
\rad C &=\Rad C =\bigoplus_{i=1}^k\langle\allone_{n_i}\rangle\label{eq:sdrad},\\
\teD(C) &\cong \bigoplus_{i=1}^k\teD(C_i)=\bigoplus_{i=1}^k\eD(C_i).
\label{eq:CoD2}
\end{align}
\end{lem}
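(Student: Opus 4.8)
The plan is to compute the radicals componentwise. Since $C=\bigoplus_{i=1}^k C_i$, the additivity relation \eqref{eq:radoplus} immediately reduces the computation of $\rad C$ to computing $\rad C_i$ for each indecomposable component. For the triply even radical, \eqref{eq:Radoplus} gives only an inclusion in general (the example $\langle\allone_8\rangle\oplus\langle\allone_8\rangle$ in the text shows why); so the first thing I would isolate is exactly when $\Rad C=\bigoplus_i\Rad C_i$ holds, and argue that the maximality hypothesis on $C$ forces this.

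Next I would reduce the maximal doubly even $C$ to understand each indecomposable $C_i$. The key structural input is that a maximal doubly even code of length $n_i$ is self-dual when $8\mid n_i$, and otherwise is a maximal self-orthogonal code whose dual is obtained by adjoining suitable coset representatives; in all cases $C_i\subset C_i^\perp$ and $C_i^\perp/C_i$ is small. I would then show $\rad C_i=\langle\mathbf{s}_i\rangle$ by checking that $B(x,\mathbf{s}_i)=\tfrac{\wt(x\ast\mathbf{s}_i)}{2}=\tfrac{\wt(x)}{2}\bmod 2$ vanishes for all $x\in C_i$ precisely when every codeword of $C_i$ has weight divisible by $4$ in the appropriate refined sense, i.e.\ when $\mathbf{s}_i$ equals $\allone_{n_i}$ or $\allzero_{n_i}$ according to the residue of $n_i$ modulo $4$. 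By Lemma~\ref{lem:R1} the radical sits inside $(C_i\ast C_i)^\perp$, and since $C_i$ is indecomposable one expects $(C_i\ast C_i)^\perp\cap C_i^\perp$ to be at most one-dimensional, pinning $\rad C_i$ down to the claimed generator. The triply even radical $\Rad C_i=\{x\in\rad C_i\mid Q(x)=0\}$ then follows by evaluating $Q(\mathbf{s}_i)=\tfrac{\wt(\mathbf{s}_i)}{4}\bmod 2$: this is $0$ exactly when $8\mid n_i$, yielding $\mathbf{t}_i$.

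For the self-dual case, the residue conditions all collapse because $8\mid n_i$ for every component (each indecomposable self-dual constituent has length divisible by $8$), so $\mathbf{s}_i=\mathbf{t}_i=\allone_{n_i}$, giving \eqref{eq:sdrad}. For \eqref{eq:CoD2}, I would use \eqref{eq:sdrad} to identify $R_i=C_i\cap\Rad C_i=\langle\allone_{n_i}\rangle$ (here $\allone_{n_i}\in C_i$ because $C_i$ is self-dual and doubly even, hence contains $\allone$), and then unwind Definition~\ref{dfn:stdbwr}: $\teD(C_i)=\langle\langle\allone_{n_i}\rangle\oplus\allzero,\{(x\mid x)\mid x\in C_i\}\rangle$, which matches $\eD(C_i)$ because the extra generators $(\allone\mid\allzero),(\allzero\mid\allone)$ of $\eD(C_i)$ are recovered from $(\allone\mid\allone)=(\allone\mid\allone)$ together with $\langle\allone\rangle\oplus\allzero$. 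The direct-sum compatibility $\teD(\bigoplus_i C_i)\cong\bigoplus_i\teD(C_i)$ then comes from the componentwise behaviour of $R=C\cap\Rad C=\bigoplus_i R_i$, which is where I would invoke the equality case of \eqref{eq:Radoplus} established in the first paragraph.

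The main obstacle I anticipate is justifying the equality $\Rad C=\bigoplus_i\Rad C_i$ under the maximality hypothesis, since the inclusion is strict in general: I would need to show that any $x=(x_1,\ldots,x_k)\in\Rad C$ with $Q(x)=0$ but individual $Q(x_i)\neq0$ cannot occur, which should follow from the fact that for a maximal doubly even code each $\rad C_i$ is one-dimensional with a distinguished weight parity, leaving no room to trade defects across components. Verifying that $(C_i\ast C_i)^\perp\cap C_i^\perp$ is one-dimensional for indecomposable maximal doubly even $C_i$ is the other delicate point, and is where indecomposability is genuinely used.
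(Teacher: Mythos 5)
Your proposal correctly reduces $\rad C$ to the indecomposable components via \eqref{eq:radoplus} and correctly identifies the two hard points, but it proves neither, and the routes you sketch are not the ones that work. To pin down $\rad C_i$ for an indecomposable component you propose to show that $(C_i\ast C_i)^\perp\cap C_i^\perp$ is at most one-dimensional (note this set is just $(C_i\ast C_i)^\perp$, since $C_i\subset C_i\ast C_i$ forces $(C_i\ast C_i)^\perp\subset C_i^\perp$); you give no argument for this, and it is a stronger claim than what is needed, since $\rad C_i$ can a priori be much smaller than $(C_i\ast C_i)^\perp$. The paper instead argues directly on $\rad C_i$: for $v\in\rad C_i$ and $x\in C_i$, the condition $B(x,v)=0$ gives $\wt(v\ast x)\equiv0\pmod4$, and Lemma~\ref{lem:R1} gives $v\in(C_i\ast C_i)^\perp$, whence $(v\ast x)\cdot y=v\cdot(x\ast y)=0$ for all $y\in C_i$, so $v\ast x\in C_i^\perp$; hence $\langle C_i,v\ast x\rangle$ is doubly even and \emph{maximality} forces $v\ast x\in C_i$ for every $x$, so $C_i$ decomposes along $\support(v)$ and indecomposability yields $v\in\langle\allone\rangle$. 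This use of maximality at the level of the products $v\ast x$ is the key idea of the proof, and nothing in your sketch replaces it. Your per-component verification is also miscalibrated: $B(x,\allone)=\wt(x)/2\equiv0\pmod2$ holds for \emph{every} codeword of a doubly even code, irrespective of $n_i\bmod4$, so the residue of $n_i$ modulo $4$ cannot be detected by the computation you describe.

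For the triply even radical, your heuristic that maximality leaves ``no room to trade defects across components'' is, as a mechanism, backwards: if two components had lengths $n_i\equiv n_j\equiv4\pmod8$, the vector $(\allone_{n_i}\mid\allone_{n_j})$ has weight $\equiv0\pmod8$, so a traded defect is perfectly possible weight-wise. What the paper actually proves is a structural constraint: a maximal doubly even code has \emph{at most one} indecomposable component of length $\equiv4\pmod8$, because otherwise $C_i\oplus C_j$ (hence $C$) fails to be maximal; \eqref{eq:isdstariix} then follows from \eqref{eq:isdstar} by evaluating $Q$ on the sums $\sum_{i\in S}\mathbf{s}_i$, the constraint guaranteeing that no such sum with a component of length $\equiv4\pmod8$ survives. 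Your proposal never formulates, let alone proves, this constraint, so the passage from \eqref{eq:isdstar} to \eqref{eq:isdstariix} (equivalently, the equality case of \eqref{eq:Radoplus} you promise in your first paragraph) remains unproved. The final part of your argument --- the self-dual case, where $8\mid n_i$ for each component, $C_i\cap\Rad C_i=\langle\allone_{n_i}\rangle$, and $\teD(C_i)=\eD(C_i)$ because $\allone\in C_i$ and $(\allzero\mid\allone)=(\allone\mid\allone)+(\allone\mid\allzero)$ --- does match the paper and is fine.
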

\begin{proof}
By (\ref{eq:radoplus}),
it suffices to prove (\ref{eq:isdstar}) when
$C$ is indecomposable.
Suppose
$v\in \rad C$ and $x\in C$.
Then
\begin{equation}\label{eq:vstarxde}
 \wt(v\ast x)\equiv 0 \pmod{4}
\end{equation}
By Lemma~\ref{lem:R1}, $v\in(C\ast C)^\perp$.
Then for any $y\in C$, $0=v\cdot(x\ast y)=(v\ast x)\cdot y$.
Hence
\begin{equation}\label{eq:vstarxd}
v\ast x\in C^\perp.
\end{equation}
By (\ref{eq:vstarxde}) and (\ref{eq:vstarxd}),
$\langle C, v\ast x\rangle$
is a doubly even code.
By maximality, $v\ast x\in C$.
Also since $x\in C$ was arbitrary,
$C$ is the direct sum of codes supported by $\support(v)$ and its complement.
Since $C$ is indecomposable, we obtain $v\in\langle\allone\rangle$.
Hence $\rad C \subset \langle\allone\rangle$.
Therefore (\ref{eq:isdstar}) holds.

We claim that
there is at most one $i$ such that $n_i\equiv 4\pmod{8}$.
If there are distinct $i,j$ such that $i\equiv j\equiv 4\pmod{8}$,
then  $C_i\oplus C_j$ is not a maximal doubly even code.
This contradicts maximality of $C$.
Therefore (\ref{eq:isdstariix}) follows from (\ref{eq:isdstar}).

If $C$ is a doubly even self-dual code, then each $C_i$ is a doubly even
self-dual code, hence $n_i$ is divisible by $8$.
Now, (\ref{eq:sdrad}) follows from (\ref{eq:isdstar}) and (\ref{eq:isdstariix}).

By
(\ref{eq:sdrad}), we have
\begin{align*}
C\cap \Rad C&=\bigoplus_{i=1}^k \langle\allone_{n_i}\rangle\\
&=\bigoplus_{i=1}^k C_i\cap\Rad C_i
\end{align*}
and hence $\teD(\bigoplus_{i=1}^k C_i)=
\bigoplus_{i=1}^k \teD(C_i)$. Since
$C_i$ is indecomposable, (\ref{eq:sdrad}) implies
$\teD(C_i)=\eD(C_i)$. This proves
(\ref{eq:CoD2}).
\end{proof}

\begin{prop}\label{prop:teDSD}
For any doubly even self-dual code $C$,
$(\teD(C)\ast\teD(C))^\perp=\teD(C)$.
In particular $\teD(C)$ is a maximal triply even code.
\end{prop}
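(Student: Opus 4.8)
The plan is to establish the maximality of $\teD(C)$ by invoking Lemma~\ref{lem:maxcondbyrad}, which reduces the problem to showing $(\teD(C)\ast\teD(C))^\perp=\teD(C)$. First I would exploit the decomposition afforded by Lemma~\ref{lem:indcomdesdii}: by~(\ref{eq:CoD2}), we have $\teD(C)\cong\bigoplus_{i=1}^k\eD(C_i)$ where each $C_i$ is an indecomposable doubly even self-dual component. Since the $*$-product distributes over direct sums via the identity $(C\oplus D)\ast(C'\oplus D')=(C\ast C')\oplus(D\ast D')$, and since dualization also respects direct sums through $(C\oplus D)^\perp=C^\perp\oplus D^\perp$, the claim $(\teD(C)\ast\teD(C))^\perp=\teD(C)$ would follow once it is verified for each summand $\eD(C_i)$ separately. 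Thus the whole proposition reduces to the indecomposable case, where $\teD(C)=\eD(C)$ with $C$ a doubly even self-dual code of length $n\equiv0\pmod8$.

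In the indecomposable case, I would compute $\eD(C)\ast\eD(C)$ directly from the generators in~(\ref{eq:eDC}). The generators are $(\allone_n\mid\allzero_n)$, $(\allzero_n\mid\allone_n)$, and $(x\mid x)$ for $x\in C$. Taking pairwise $*$-products of these generators and using $\support(u\ast v)=\support(u)\cap\support(v)$, one finds that $\eD(C)\ast\eD(C)$ is spanned by vectors of the form $(x\ast y\mid\allzero)$, $(\allzero\mid x\ast y)$, and $(x\ast y\mid x\ast y)$ as $x,y$ range over $C$, together with the single-sided vectors coming from the all-ones generators. Since $C$ is self-dual, we have $C\ast C\supset C$ and more precisely $C\ast C=\FF_2^n$ is expected (a self-dual code contains a vector of full support structure forcing the $*$-closure to be everything, or at least $\allone$); I would pin down $C\ast C$ exactly, noting that for a doubly even self-dual code the span of coordinatewise products of codewords is the full space $\FF_2^n$. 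This identifies $\eD(C)\ast\eD(C)$ as a fairly large, explicitly described subspace of $\FF_2^{2n}$.

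The final and most delicate step is to compute the dual $(\eD(C)\ast\eD(C))^\perp$ and match it against $\eD(C)$. With $\eD(C)\ast\eD(C)$ in hand, a vector $(a\mid b)\in\FF_2^{2n}$ lies in the dual precisely when it is orthogonal to every spanning vector; the single-sided generators $(\allone\mid\allzero)$ and $(\allzero\mid\allone)$ force $a$ and $b$ to have even weight, while orthogonality to the diagonal products $(x\ast y\mid x\ast y)$ imposes a condition coupling $a$ and $b$ through $C\ast C$. Unwinding these conditions and using self-duality of $C$ (so that $C^\perp=C$), I expect the orthogonality constraints to collapse exactly to membership in $\langle(\allone\mid\allzero),(\allzero\mid\allone),\{(x\mid x)\}\rangle=\eD(C)$.

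The main obstacle I anticipate is the dimension bookkeeping: verifying that the computed dual has precisely dimension $n+1$ (matching the $[2n,n+1]$ count noted after Definition~\ref{dfn:stdbwr}) rather than being strictly larger, so that the containment $\eD(C)\subset(\eD(C)\ast\eD(C))^\perp$—which is automatic since $\eD(C)$ is triply even and hence self-orthogonal under $*$—is in fact an equality. The cleanest route may be to show $\eD(C)\supset(\eD(C)\ast\eD(C))^\perp$ by a direct membership argument, thereby sidestepping a delicate dimension count; the reverse inclusion is free from Lemma~\ref{lem:maxcondbyrad} together with Lemma~\ref{lem:R1}, since any triply even code $E$ satisfies $E\subset\Rad E\subset(E\ast E)^\perp$.
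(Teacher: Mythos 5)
Your scaffolding matches the paper's proof: reduce to indecomposable $C$ via (\ref{eq:CoD2}) and the compatibility of $\ast$ and $\perp$ with direct sums, compute $\teD(C)\ast\teD(C)=\eD(C)\ast\eD(C)$, dualize, and deduce maximality from Lemma~\ref{lem:R1} together with Lemma~\ref{lem:maxcondbyrad}. However, the central computation contains a genuine error. Since $\ast$ distributes over addition, $\eD(C)\ast\eD(C)$ is spanned by the pairwise products of the generators, namely $(\allone\mid\allzero)$, $(\allzero\mid\allone)$, $(x\mid\allzero)$, $(\allzero\mid x)$ for $x\in C$, and $(x\ast y\mid x\ast y)$ for $x,y\in C$. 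The vectors $(x\ast y\mid\allzero)$ and $(\allzero\mid x\ast y)$ for arbitrary $x,y\in C$ that you list are \emph{not} in the code: every element of the true span has the form $(a+z\mid b+z)$ with $a,b\in C$ and $z\in C\ast C$, so $(x\ast y\mid\allzero)$ lies in it only when $x\ast y\in C$. Your description would give $\eD(C)\ast\eD(C)=(C\ast C)\oplus(C\ast C)$, whose dual is $\langle\allone\rangle\oplus\langle\allone\rangle$ --- nowhere near $\eD(C)$, so the final matching step would fail. The correct identity, used by the paper, is $\eD(C)\ast\eD(C)=C\oplus C+\eD(C\ast C)$.

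The second error is the claim $C\ast C=\FF_2^n$: this is false for \emph{every} self-orthogonal code, since $\wt(x\ast y)\equiv x\cdot y\equiv 0\pmod 2$ forces $C\ast C\subset\langle\allone\rangle^\perp$. What you actually need, and what yields the paper's dimension count $\dim(\eD(C)\ast\eD(C))=3n-1$ for $C$ of length $2n$, is the equality $C\ast C=\langle\allone\rangle^\perp$, and \emph{this} is where indecomposability enters (mere self-duality does not suffice --- for $e_8\oplus e_8$ the equality fails, which is precisely why your reduction in the first step is not cosmetic): if $v\in(C\ast C)^\perp$ then $v\ast x\in C^\perp=C$ for all $x\in C$, as in the proof of Lemma~\ref{lem:indcomdesdii}, so $C$ decomposes along $\support(v)$ and indecomposability forces $v\in\langle\allone\rangle$. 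Once both points are repaired, the dualization is clean and needs no delicate bookkeeping: $(a\mid b)$ is orthogonal to $C\oplus C+\eD(C\ast C)$ if and only if $a,b\in C^\perp=C$ and $a+b\in(C\ast C)^\perp=\langle\allone\rangle$, which (as $\allone\in C$) says exactly $(a\mid b)\in\eD(C)$. Your closing observation that the containment $\eD(C)\subset(\eD(C)\ast\eD(C))^\perp$ is automatic from $E\subset\Rad E\subset(E\ast E)^\perp$ is correct and consistent with the paper.
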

\begin{proof}
Suppose that $C$ is an indecomposable doubly even self-dual code
of length $2n$.
Then (\ref{eq:CoD2}) implies
$\teD(C)\ast\teD(C)=\eD(C)\ast\eD(C)=C\oplus C + \eD(C\ast C)$,
hence $\dim(\teD(C)\ast\teD(C))=3n-1=4n-\dim\teD(C)$.
This implies that $(\teD(C)\ast\teD(C))^\perp=\teD(C)$.
By (\ref{eq:CoD2}), the identity holds
also for any decomposable double even self-dual code $C$.
Now $\rad \teD(C)\subset \teD(C)$ by Lemma~\ref{lem:R1},
and hence $\teD(C)$ is a maximal triply even code
by Lemma~\ref{lem:maxcondbyrad}.
\end{proof}

\begin{exa}\label{exa:sdoublingtec}
It is known that the $[8, 4, 4]$ Hamming code
$e_8=\eD(\langle\allone_4\rangle^\perp)$ is the unique
doubly even self-dual code of length $8$, up to equivalence.
Also,
$d_{16}^{+} =\eD(\langle\allone_8\rangle^\perp)$ and $e_8\oplus e_8$ are
the only doubly even self-dual codes of length $16$, up to equivalence.
By Proposition~\ref{prop:teDSD},
$\teD(e_8)$, $\teD(d_{16}^{+})$, $\teD(e_8\oplus e_8)$ are maximal triply
even code of dimension $5$, $9$ and $10$ respectively.
In particular $\teD(e_8)=\eD(e_8)$ is the Reed--Muller code $\RM(1,4)$
and $\teD(e_8\oplus e_8)=\RM(1,4)^{\oplus2}$.
\end{exa}



\begin{exa}\label{exa:doublingtec}
It is known \cite{PS} that there are precisely $9$ doubly even
self-dual codes of length $24$.
Two of these $9$ codes are decomposable, and they are
$d_{16}^{+}\oplus e_8$ and $e_8^{\oplus3}$. The remaining
$7$ codes are indecomposable and they are denoted by
$g_{24}, d_{24}^{+}, d_{12}^{2+}, (d_{10}e_7^2)^{+},
d_8^{3+}, d_6^{4+}, d_4^{6+}$.
By Proposition~\ref{prop:teDSD},
$\teD(C)$ is a maximal triply even code for any of the $9$
doubly even self-dual codes $C$.
We note from (\ref{eq:sdrad})
that $\Rad C\subset C$ and $\dim\Rad C$ is the
number of indecomposable components. Thus, for
indecomposable doubly even self-dual codes $C$ of length $24$,
$\dim\teD(C)=13$ holds.
Also,
$\teD(d_{16}^{+}\oplus e_8)=
\eD(\eD(\langle\allone_8\rangle^\perp))\oplus \RM(1,4)$
has dimension $14$, while
$\teD(e_8^{\oplus3})=\RM(1,4)^{\oplus3}$
has dimension $15$.
\end{exa}

\begin{rem}
As shown in Example~\ref{exa:doublingtec},
the dimension of maximal triply even codes varies even
if the length is fixed.
The largest possible dimension of triply even codes,
however, has been determined in \cite{Ward4},  and
the codes achieving the largest dimension have been
determined in \cite{Liu}.
\end{rem}

\section{Triply even codes constructed from triangular graphs}\label{sec:triangular_graph}

Let $n$ be a positive integer with $n\ge4$, and let
$\Omega$ be a set of $n$ elements.
We denote by $\binom{\Omega}{2}$ the set of two-element subsets of $\Omega$.
 The triangular graph
$\tg{n}$ has the set of vertices $\binom{\Omega}{2}$, and
two vertices $\alpha,\beta$ are adjacent whenever
$|\alpha\cap\beta|=1$. It is known \cite{Hub}
that the graph $\tg{n}$ is a strongly
regular graph with parameters
\[
(v,k,\lambda,\mu)=\left(\frac{n(n-1)}{2},2(n-2),n-2,4\right).
\]
Let $A_n$ denote the adjacency matrix of $\tg{n}$.
Then every row of $A_n$ has weight $2(n-2)$, and for any
two distinct rows of $A_n$, the size of the intersection
of their supports is either $n-2$ or $4$.
Let $\tgc{n}$ be the binary code with generator matrix $A_n$.

It is clear that the code $\tgc{n}$ is triply even only if $n \equiv 2
\pmod{4}$. The converse also holds by the following lemma.

\begin{lem}[Haemers, Peeters and van Rijckevorsel {\cite[Subsection~4.1]{HPR}}]\label{lem:HPR}
If $n \equiv 2 \pmod{4}$, the weight enumerator of $\tgc{n}$ is
\[
\we_{\tgc{n}}(x) = \sum_{l=0}^{\lfloor(n-1)/4\rfloor}
\binom{n}{2l} x^{2l(n-2l)}.
\]
In particular, $\tgc{n}$ is a triply even of dimension $n-2$.
\end{lem}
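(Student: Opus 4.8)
The plan is to realize $\tgc{n}$ as the image of a ``cut'' map and to read off both the dimension and the weight enumerator from the combinatorics of vertex subsets of $\Omega$. For each $i\in\Omega$, let $\chi_i\in\FF_2^{\binom{\Omega}{2}}$ be the indicator vector of the edges (two-element subsets) containing $i$, and define the linear map $\phi\colon\FF_2^\Omega\to\FF_2^{\binom{\Omega}{2}}$ by $\phi(e_i)=\chi_i$. For a subset $S\subseteq\Omega$ one checks that an edge $\{k,l\}$ lies in $\support(\phi(S))$ exactly when $|S\cap\{k,l\}|=1$, so $\phi(S)$ is the indicator of the cut determined by $S$, and hence $\wt(\phi(S))=|S|\,(n-|S|)$. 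The first step is the identification: the row of $A_n$ indexed by $\{i,j\}$ is precisely $\chi_i+\chi_j=\phi(e_i+e_j)$, since the edges meeting $\{i,j\}$ in exactly one vertex are those containing $i$ or $j$ but not both. Writing $E\subseteq\FF_2^\Omega$ for the even-weight subcode, which is spanned by the vectors $e_i+e_j$, we therefore obtain $\tgc{n}=\phi(E)$.

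Next I would compute the kernel. Since the complete graph on $\Omega$ is connected, $\phi(S)=\allzero$ forces $S\in\{\allzero_n,\allone_n\}$, so $\kernel\phi=\langle\allone_n\rangle$. For $n\equiv2\pmod4$ the vector $\allone_n$ has even weight and thus lies in $E$, whence $\dim\tgc{n}=\dim E-1=(n-1)-1=n-2$. The codewords of $\tgc{n}$ are then in bijection with the fibres of $\phi|_E$, that is, with the unordered pairs $\{S,\Omega\setminus S\}$ of even-weight subsets of $\Omega$; note that this complementation involution is fixed-point-free.

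For the weight enumerator I would organize these pairs by size. Write $n=4t+2$, so $\lfloor(n-1)/4\rfloor=t$ and $n/2=2t+1$ is odd; in particular no even-weight subset has size $n/2$, so $S$ and its complement always have distinct (even) sizes. The function $s\mapsto s(n-s)$ is strictly increasing on $[0,n/2]$ and symmetric about $n/2$, so the weight $2l(n-2l)$ determines the unordered size-pair $\{2l,\,n-2l\}$ uniquely. Choosing $l\in\{0,1,\dots,t\}$ as the representative with the smaller even size, the $\binom{n}{2l}$ subsets of size $2l$ inject into codewords (two distinct such subsets agree under $\phi$ only through complementation, which changes the size), and each resulting codeword is also hit by exactly one subset of size $n-2l$. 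Hence the number of codewords of weight $2l(n-2l)$ is $\binom{n}{2l}$, giving the stated enumerator; summing the coefficients and using $\binom{n}{2l}=\binom{n}{n-2l}$ recovers $2^{n-2}$, consistent with the dimension. Triple evenness is then immediate: with $n=4t+2$ one has $2l(n-2l)=4l(2t+1-l)$, and since $2t+1$ is odd exactly one of $l$ and $2t+1-l$ is even, so $8\mid 2l(n-2l)$ for every $l$.

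The computations here are all routine; the one place needing care is the bookkeeping of the $2$-to-$1$ correspondence, and this is exactly where the hypothesis $n\equiv2\pmod4$ enters cleanly: because $n/2$ is odd there is no even subset of size $n/2$, so the complementation involution on even subsets is fixed-point-free and the size-parametrization has no exceptional middle term. I expect the main (modest) obstacle to be verifying that distinct sizes $2l$ genuinely yield distinct codewords and that no codeword is counted twice, which the strict monotonicity of $s\mapsto s(n-s)$ on $[0,n/2]$ settles.
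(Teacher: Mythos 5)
Your proof is correct. Note first that the paper does not actually prove this lemma: it is quoted from Haemers--Peeters--van Rijckevorsel, and the paper only remarks (after Lemma~\ref{lem:Tnbase}) that the weight enumerator ``can also be derived from the basis'' $\{r_1,\dots,r_{n-2}\}$. So your argument is a genuinely independent, self-contained proof, and it checks out at every step: the identification of the row indexed by $\{i,j\}$ with $\chi_i+\chi_j$ is right (the edge $\{i,j\}$ itself cancels in the sum, matching the adjacency rule $|\alpha\cap\beta|=1$), so $\tgc{n}=\phi(E)$ is the even-cut code of $K_n$; connectivity of $K_n$ gives $\kernel\phi=\langle\allone_n\rangle$, and since $n\equiv2\pmod4$ is even, $\allone_n\in E$ yields $\dim\tgc{n}=n-2$; and the fibre bookkeeping via $\{S,\Omega\setminus S\}$ with $n/2=2t+1$ odd correctly produces $\binom{n}{2l}$ codewords of weight $2l(n-2l)=4l(2t+1-l)\equiv0\pmod8$. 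Compared with the route the paper gestures at, where one would expand a codeword as $\sum_{i\in S}r_i=\phi\bigl(e_S+|S|e_n\bigr)$ for $S\subset\{1,\dots,n-2\}$ and sort by the parity of $|S|$, eventually merging binomial counts $\binom{n-2}{\cdot}$ into $\binom{n}{2l}$, your cut-map formulation buys a cleaner parametrization: dimension, weight distribution, and triple evenness all drop out of the single formula $\wt(\phi(S))=|S|(n-|S|)$, with the hypothesis $n\equiv2\pmod4$ entering exactly once (no even set of size $n/2$, so no exceptional middle term). One cosmetic point: the complementation involution $S\mapsto S+\allone$ is fixed-point-free for trivial reasons ($S=\Omega\setminus S$ is impossible for any $S$); what your parity observation is actually needed for is that the two sizes $2l$ and $n-2l$ in a fibre are always distinct, which is the fact your counting uses.
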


Let $\alpha_i =\{i, n\}\in\binom{\Omega}{2}$, and we denote by $r_i$
the row of $A_n$ indexed by $\alpha_i$ i.e.,
$\{k,l\}\in\support(r_i)$ if and only if $|\alpha_i\cap \{k,l\}|=1$.
Then the following lemma holds.

\begin{lem}[Key, Moori and Rodrigues {\cite[Lemma~3.5]{KMR}}]\label{lem:Tnbase}
If $n$ is even, then $\{r_i \mid  i=1,2,\ldots,n-2\}$ is a basis of $\tgc{n}$.
\end{lem}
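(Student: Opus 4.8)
The goal is to prove that the rows $r_1,\dots,r_{n-2}$ of the adjacency matrix $A_n$ of the triangular graph $\tg{n}$ form a basis of $\tgc{n}$ when $n$ is even. Since $\tgc{n}$ is spanned by all rows of $A_n$ and its dimension is known to be $n-2$ (by Lemma~\ref{lem:HPR} when $n\equiv 2\pmod 4$, and more generally the rank of $A_n$ is $n-2$ for even $n$), it suffices to show that the $n-2$ vectors $r_1,\dots,r_{n-2}$ are linearly independent over $\FF_2$ and that they span $\tgc{n}$. Because the count $n-2$ matches the dimension exactly, linear independence alone gives both at once, so the plan is to reduce everything to proving that $\{r_1,\dots,r_{n-2}\}$ is linearly independent.

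First I would unwind the combinatorial description of the $r_i$. With $\alpha_i=\{i,n\}$, a vertex $\{k,l\}$ lies in $\support(r_i)$ exactly when $\{k,l\}$ meets $\{i,n\}$ in precisely one point. I would split this into the cases where $\{k,l\}$ contains $n$ but not $i$ (these are the vertices $\{j,n\}$ with $j\ne i$, $j\le n-1$) and where $\{k,l\}$ contains $i$ but not $n$ (these are $\{i,m\}$ with $m\ne n$, $m\le n-1$). This makes each row $r_i$ into an explicit indicator function on the vertex set, and I would organize the coordinates into two natural blocks: the $n-1$ ``star'' vertices $\{j,n\}$ for $1\le j\le n-1$, and the remaining vertices $\{k,l\}$ with $k,l\le n-1$.

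Next I would test independence directly. Suppose $\sum_{i\in I} r_i=\allzero$ for some subset $I\subset\{1,\dots,n-2\}$, working over $\FF_2$. The cleanest obstacle to overcome is to find, for each coordinate block, a linear condition on $I$. Restricting the relation to the star-vertex coordinate indexed by $\{j,n\}$: this vertex belongs to $\support(r_i)$ iff $i\ne j$ (for $i$ among $1,\dots,n-2$, and $j\le n-1$), so the coefficient at $\{j,n\}$ equals $|I|+[\,j\in I\,] \pmod 2$ when $j\le n-2$, and equals $|I|\pmod 2$ when $j=n-1$. The coordinate $\{n-1,n\}$ forces $|I|\equiv 0\pmod 2$, and then each coordinate $\{j,n\}$ with $j\le n-2$ forces $[\,j\in I\,]\equiv 0$, i.e.\ $j\notin I$. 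This already yields $I=\varnothing$, proving independence.

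The step I expect to require the most care is the bookkeeping of which vertices lie in which supports, and in particular correctly handling the boundary index $j=n-1$, which is not among the chosen rows $r_1,\dots,r_{n-2}$ but does index a star coordinate; it is precisely this coordinate that delivers the parity condition $|I|\equiv 0$ and makes the argument close up. The hypothesis that $n$ is even enters through Lemma~\ref{lem:HPR}-type rank information guaranteeing $\dim\tgc{n}=n-2$, so that independence of $n-2$ generators is equivalent to their being a basis; I would cite the known rank of the triangular-graph adjacency matrix for even $n$ rather than reprove it. Once independence is established, the conclusion $\{r_i\mid i=1,\dots,n-2\}$ is a basis of $\tgc{n}$ is immediate by a dimension count.
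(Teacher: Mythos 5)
Your proof is correct: the coordinate bookkeeping at the star vertices $\{j,n\}$ is right, the coordinate $\{n-1,n\}$ does force $|I|\equiv0\pmod2$, and then each $\{j,n\}$ with $j\le n-2$ forces $j\notin I$, so the $r_i$ are independent. Note, however, that the paper itself gives no proof of this lemma --- it is quoted from Key, Moori and Rodrigues --- so the comparison is with the standard self-contained argument rather than with anything in the text. There your route genuinely differs: you prove only independence and import the spanning statement from the known $2$-rank of $A_n$ (and you are right that the paper's Lemma~\ref{lem:HPR} covers only $n\equiv2\pmod4$; for $n\equiv0\pmod4$ you must lean on the Tonchev and Brouwer--van~Eijl references the paper cites after the lemma). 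The self-contained alternative proves spanning directly from two identities: $r_{\{k,l\}}=r_k+r_l$ for all $k,l<n$ (an easy support computation, valid for every $n$), and $\sum_{i=1}^{n-1}r_i=\allzero$ when $n$ is even (each star coordinate receives $n-2\equiv0$ ones, each non-star coordinate exactly $2$), whence $r_{n-1}=\sum_{i=1}^{n-2}r_i$ and the rows $r_1,\dots,r_{n-2}$ span $\tgc{n}$. This is worth flagging because your independence computation never uses the parity of $n$ at all --- it is valid for odd $n$, where the conclusion of the lemma fails since the rank is then $n-1$ --- so in your version the evenness hypothesis lives entirely inside the external rank citation. That is logically legitimate, but the direct spanning argument buys a proof with no external input and makes visible exactly where $n$ even is used; your version buys brevity, at the cost of resting the heart of the lemma on a cited rank theorem that is itself usually proved by the very relations above.
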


We note that
the dimension of $\tgc{n}$ has already been
determined by Tonchev \cite[Lemma~3.6.6]{Ton} and Brouwer and Van Eijl \cite{BvE}.
An explicit basis of $\tgc{n}$ is needed in the sequel
to establish maximality of $\tgc{n}$.
The weight enumerator given in Lemma~\ref{lem:HPR} can also be derived from
the basis.

\begin{lem}\label{lem:TnastTnbase}
If $n$ is even, then $\{r_i\ast r_j \mid 1\le i \le j \le n-2\}$
is a basis of $\tgc{n}\ast \tgc{n}$.
In  particular,
\[
 \dim(\tgc{n}\ast{}\tgc{n}) = \frac{(n-1)(n-2)}{2}.
\]
\end{lem}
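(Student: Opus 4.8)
The plan is to first reduce the statement to a linear-independence claim, and then establish independence by restricting the candidate basis to a well-chosen set of coordinates. Since the coordinatewise product $\ast$ is bilinear over $\FF_2$, Lemma~\ref{lem:Tnbase} shows that $\tgc{n}\ast\tgc{n}$ is spanned by $\{r_i\ast r_j\mid 1\le i\le j\le n-2\}$; note also that $r_i\ast r_i=r_i$, so the diagonal products are precisely the basis of $\tgc{n}$ itself. As there are $\binom{n-2}{2}+(n-2)=\frac{(n-1)(n-2)}{2}$ unordered pairs $(i,j)$ with $1\le i\le j\le n-2$, it suffices to prove that this spanning set is linearly independent.

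The computation I would carry out first records the supports in terms of the triangular-graph structure. The support of $r_i$ is the set of neighbours of $\alpha_i=\{i,n\}$, namely $\{\{i,m\}\mid m\ne i,n\}\cup\{\{n,m\}\mid m\ne i,n\}$. For $i<j$, a vertex lies in $\support(r_i)\cap\support(r_j)$ exactly when it is a common neighbour of $\{i,n\}$ and $\{j,n\}$, and a short case analysis gives $\support(r_i\ast r_j)=\{\{i,j\}\}\cup\{\{n,m\}\mid m\in\Omega\setminus\{i,j,n\}\}$.

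The key idea is to restrict every vector to the coordinates indexed by the two-element subsets of $\{1,\dots,n-1\}$ (those not containing $n$). Under this restriction each off-diagonal product $r_i\ast r_j$ with $i<j\le n-2$ collapses to the single standard basis vector $e_{\{i,j\}}$, while the diagonal vector $r_k$ becomes the ``star'' $\sum_{m\ne k}e_{\{k,m\}}$ with $m$ ranging over $\{1,\dots,n-1\}$. I would then peel off the coefficients: in any vanishing linear combination, the coordinate $\{k,n-1\}$ is hit only by $r_k$ (no off-diagonal product reaches a subset containing $n-1$, since its two-element part has both indices at most $n-2$, and no other star contains both $k$ and $n-1$), which forces every diagonal coefficient to vanish; the remaining off-diagonal terms are then distinct standard basis vectors and hence independent. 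This yields linear independence of the whole set, so it is a basis and $\dim(\tgc{n}\ast\tgc{n})=\frac{(n-1)(n-2)}{2}$.

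The main obstacle is the support computation for $r_i\ast r_j$: one must argue carefully, using that $\alpha_i$ and $\alpha_j$ share the common point $n$, that the only common neighbour avoiding $n$ is $\{i,j\}$ while the remaining common neighbours all pass through $n$. Once this is in hand, restriction to the subsets not containing $n$ makes the independence essentially automatic, since the off-diagonal products become coordinate vectors and the diagonal contributions are isolated by the coordinates $\{k,n-1\}$.
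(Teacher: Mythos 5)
Your proposal is correct and takes essentially the same route as the paper's proof: you establish the identical support formula $\support(r_i\ast r_j)=\{\{i,j\}\}\cup\{\{k,n\}\mid 1\le k<n,\ k\neq i,j\}$, then kill the diagonal coefficients via the coordinates $\{k,n-1\}$ and the off-diagonal ones via the coordinates $\{i,j\}$ with $i,j\le n-2$, exactly as in the paper. Your device of restricting to the coordinates avoiding $n$ is only a repackaging of the paper's direct inspection of those same coordinates in a vanishing linear combination.
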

\begin{proof}
Observe that, for $1\le i<j<n$, we have
\begin{equation}\label{eq:supprirj}
 \support(r_i\ast r_j) = \{\{i,j\}\}\cup \{\alpha_k \mid 1\le k< n,
\; k\neq i, j\}.
\end{equation}
Suppose
\[
\sum_{i=1}^{n-2} c_ir_i + \sum_{1\le i<j\le n-2} c_{i,j}r_i\ast r_j = 0,
\]
where $c_i, c_{i,j}\in\FF_2$.
Then $c_i=0$ for $i=1,\ldots,n-2$,
because $|\alpha_i\cap\{j,n-1\}|=1$ if and only if $i=j$.
Thus
\[
 \sum_{1\le i<j\le n-2} c_{i,j}r_i\ast r_j = 0.
\]
For $i,j,k,l\in\{1, \ldots, n-2\}$ with $i\neq j$, $k\neq l$,
(\ref{eq:supprirj}) implies
$\{k,l\}\in\support(r_{i}\ast r_{j})$
if and only if $\{k,l\}=\{i,j\}$.
This implies $c_{i,j}= 0$.
\end{proof}

\begin{lem}\label{lem:mTn}
If $n\equiv 2\pmod{4}$, then
$(\tgc{n}\ast \tgc{n})^\perp = \tgc{n} +\langle\allone\rangle$.
In particular, $\tgc{n}$ is a maximal triply even code.
\end{lem}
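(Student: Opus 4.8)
The plan is to prove the identity by combining one inclusion with a dimension count. Write $N=\binom{n}{2}=n(n-1)/2$ for the length. By Lemma~\ref{lem:TnastTnbase} we have $\dim(\tgc{n}\ast\tgc{n})=(n-1)(n-2)/2$, so
\[
\dim (\tgc{n}\ast\tgc{n})^\perp = N-\frac{(n-1)(n-2)}{2}=\frac{n(n-1)}{2}-\frac{(n-1)(n-2)}{2}=n-1 .
\]
On the other side, since $n\equiv2\pmod4$ the length $N=n(n-1)/2$ is a product of two odd integers, hence odd; because $\tgc{n}$ is triply even (Lemma~\ref{lem:HPR}) every codeword has even weight, so $\allone\notin\tgc{n}$ and therefore $\dim(\tgc{n}+\langle\allone\rangle)=\dim\tgc{n}+1=(n-2)+1=n-1$ using $\dim\tgc{n}=n-2$ from Lemma~\ref{lem:Tnbase}. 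Thus it suffices to establish the inclusion $\tgc{n}+\langle\allone\rangle\subset(\tgc{n}\ast\tgc{n})^\perp$.

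I would verify the two generating pieces separately. First, $\tgc{n}\subset(\tgc{n}\ast\tgc{n})^\perp$ is precisely the statement that $\tgc{n}$ is triply even: for $u,v,w\in\tgc{n}$ one has $w\cdot(u\ast v)=\wt(u\ast v\ast w)\bmod2=0$ by condition~\eqref{cnd:gencondiii} of Lemma~\ref{lem:gencond}, and the products $u\ast v$ span $\tgc{n}\ast\tgc{n}$. Second, to see $\allone\in(\tgc{n}\ast\tgc{n})^\perp$ it is enough that each spanning vector $u\ast v$ has even weight, since $\allone\cdot(u\ast v)=\wt(u\ast v)\bmod2$ and pairing with $\allone$ is linear. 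But $\tgc{n}$ is doubly even, so $\wt(u\ast v)=\tfrac12\bigl(\wt u+\wt v-\wt(u+v)\bigr)\equiv0\pmod2$; alternatively one reads $\wt(r_i\ast r_j)=n-2$ for $i<j$ and $\wt(r_i\ast r_i)=2(n-2)$ directly off~\eqref{eq:supprirj}, both even. Hence $\allone$ is orthogonal to $\tgc{n}\ast\tgc{n}$, and combining the two pieces yields $\tgc{n}+\langle\allone\rangle\subset(\tgc{n}\ast\tgc{n})^\perp$. Equality then follows from the matching dimensions.

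For the final assertion I would invoke Lemma~\ref{lem:maxcondbyrad}, which reduces maximality of the triply even code $\tgc{n}$ to the equality $\tgc{n}=\Rad\tgc{n}$. That lemma gives $\tgc{n}\subset\Rad\tgc{n}$, while Lemma~\ref{lem:R1} gives $\Rad\tgc{n}\subset\rad\tgc{n}\subset(\tgc{n}\ast\tgc{n})^\perp=\tgc{n}+\langle\allone\rangle$. As a set, $\tgc{n}+\langle\allone\rangle=\tgc{n}\cup(\allone+\tgc{n})$, and every vector $\allone+c$ with $c\in\tgc{n}$ has weight $N-\wt(c)$, which is odd because $N$ is odd and $\wt(c)$ even; since members of $\Rad\tgc{n}$ have weight divisible by $8$, no such vector lies in $\Rad\tgc{n}$. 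Therefore $\Rad\tgc{n}=\tgc{n}$, and $\tgc{n}$ is maximal. The only genuinely delicate point in the whole argument is the dimension bookkeeping: everything hinges on the parity of $N$, equivalently on the hypothesis $n\equiv2\pmod4$, which simultaneously forces $\allone\notin\tgc{n}$ (fixing the right-hand dimension at $n-1$) and, through the value $\dim(\tgc{n}\ast\tgc{n})=(n-1)(n-2)/2$ supplied by Lemma~\ref{lem:TnastTnbase}, pins the left-hand side to the same dimension.
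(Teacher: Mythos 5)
Your proof is correct and takes essentially the same route as the paper's: the containment $\tgc{n}+\langle\allone\rangle\subset(\tgc{n}\ast\tgc{n})^\perp$ plus the dimension count from Lemmas~\ref{lem:Tnbase} and~\ref{lem:TnastTnbase} for the equality, then the odd weight of $\allone$ combined with Lemma~\ref{lem:R1} to force $\Rad\tgc{n}\subset\tgc{n}$, and Lemma~\ref{lem:maxcondbyrad} for maximality. You have merely written out the details (the inclusion checks and the parity of $n(n-1)/2$) that the paper's terse proof leaves to the reader.
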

\begin{proof}
By $(\tgc{n}\ast \tgc{n})^\perp \supset \tgc{n}+\langle\allone\rangle$ and
comparing the dimensions using Lemmas~\ref{lem:Tnbase} and
\ref{lem:TnastTnbase},
we obtain $(\tgc{n}\ast{}\tgc{n})^\perp=\tgc{n}+\langle\allone\rangle$.
Since $\wt(\allone)=\frac{n(n-1)}{2} \equiv 1 \pmod{2}$,
Lemma~\ref{lem:R1} implies
$\Rad \tgc{n}\subset \tgc{n}$.
Thus $\tgc{n}$ is a maximal triply even code by Lemma~\ref{lem:maxcondbyrad}.
\end{proof}

We define $\ttgc{n}$ to be the code of length
$l = 8 \lceil \frac{1}{8}\frac{n(n-1)}{2}\rceil$ constructed from $\tgc{n}$
together with the all-ones vector of length $l$, i.e.,
$\ttgc{n} = \langle\allone_l\rangle + \tgc{n}\oplus \allzero_{l'}$
where $l' = l- \frac{n(n-1)}{2}$.

\begin{thm}
If $n \equiv 2 \pmod{4}$, then
$\ttgc{n}$ is a maximal triply even code.
\end{thm}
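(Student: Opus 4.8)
The plan is to show that $\ttgc{n}$ equals its own triply even radical, which by Lemma~\ref{lem:maxcondbyrad} is equivalent to maximality. Concretely, I would verify that $\Rad\ttgc{n}\subset\ttgc{n}$ and then invoke Lemma~\ref{lem:maxcondbyrad} exactly as in the proof of Lemma~\ref{lem:mTn}. Since $\ttgc{n}$ is obtained from $\tgc{n}$ by padding with zeros in the last $l'$ coordinates and then adjoining the all-ones vector $\allone_l$, the first step is to describe its star-square. Writing $B_n=\tgc{n}\oplus\allzero_{l'}$, we have $\ttgc{n}=\langle\allone_l\rangle+B_n$, and one computes
\[
\ttgc{n}\ast\ttgc{n}=(B_n\ast B_n)+(\allone_l\ast B_n)+\langle\allone_l\rangle
=(B_n\ast B_n)+B_n+\langle\allone_l\rangle,
\]
because $\allone_l\ast u=u$ for any $u$. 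Here $B_n\ast B_n=(\tgc{n}\ast\tgc{n})\oplus\allzero_{l'}$, whose dimension is known from Lemma~\ref{lem:TnastTnbase}.

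Next I would pin down $(\ttgc{n}\ast\ttgc{n})^\perp$. The key point is that the extra coordinates behave rigidly: any vector $z\in(\ttgc{n}\ast\ttgc{n})^\perp$ must in particular be orthogonal to $B_n\ast B_n$ and to every padded generator $r_i\oplus\allzero_{l'}$, and orthogonal to $\allone_l$. Orthogonality to $B_n\ast B_n$ and $B_n$ forces the restriction of $z$ to the first $\tfrac{n(n-1)}{2}$ coordinates to lie in $(\tgc{n}\ast\tgc{n})^\perp$, which by Lemma~\ref{lem:mTn} equals $\tgc{n}+\langle\allone\rangle$, while the remaining $l'$ coordinates are unconstrained except by the single relation coming from $\allone_l$. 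I expect a dimension count, combining Lemmas~\ref{lem:Tnbase}, \ref{lem:TnastTnbase} and~\ref{lem:mTn}, to identify $(\ttgc{n}\ast\ttgc{n})^\perp$ precisely and, in particular, to show that every element of it that is triply-even-radical (i.e.\ also has weight divisible by $8$ and pairs trivially) already lies in $\ttgc{n}$.

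The crux is the weight/parity bookkeeping forced by the padding. The length $l$ is chosen to be the least multiple of $8$ that is at least $\tfrac{n(n-1)}{2}$, so $\wt(\allone_l)=l\equiv0\pmod 8$ and $\allone_l$ is genuinely a triply even codeword, unlike the all-ones vector of the unpadded $\tgc{n}$ whose weight is odd (this is precisely what made $\Rad\tgc{n}\subset\tgc{n}$ in Lemma~\ref{lem:mTn}). I would therefore track, for a candidate $z\in\Rad\ttgc{n}$, both $Q(z)=\tfrac{\wt(z)}4\bmod 2$ and the pairing $B(\cdot,z)$ against the generators, using the direct-sum splitting of the coordinates to reduce everything to the already-understood code $\tgc{n}$ together with the controlled contribution of the $l'$ padding coordinates.

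The step I expect to be the main obstacle is showing that no new radical vector is created by the padding together with $\allone_l$: one must rule out a vector supported partly on the last $l'$ coordinates that is orthogonal to all the star-products yet fails to lie in $\ttgc{n}$. Handling this cleanly requires exploiting that $l'<8$ (since $l$ is the \emph{smallest} multiple of $8$ above $\tfrac{n(n-1)}2$), so the padding block is too short to support an independent triply even radical contribution; combined with the self-dual-type identity $(\tgc{n}\ast\tgc{n})^\perp=\tgc{n}+\langle\allone\rangle$ from Lemma~\ref{lem:mTn}, this should force $\Rad\ttgc{n}\subset\ttgc{n}$ and hence, via Lemma~\ref{lem:maxcondbyrad}, the maximality of $\ttgc{n}$.
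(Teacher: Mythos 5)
Your proposal is correct and follows essentially the same route as the paper: you compute $(\ttgc{n}\ast\ttgc{n})^\perp$ via the identity $(\tgc{n}\ast\tgc{n})^\perp=\tgc{n}+\langle\allone\rangle$ from Lemma~\ref{lem:mTn}, then use $l'<8$ to force any weight-divisible-by-$8$ radical vector to vanish on the padding coordinates, giving $\Rad\ttgc{n}\subset\ttgc{n}$ and maximality via Lemma~\ref{lem:maxcondbyrad}. This matches the paper's proof, down to the same chain of equalities for the star-square dual and the same use of $l'<8$.
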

\begin{proof}
Let $l=8 \lceil \frac{1}{8}\frac{n(n-1)}{2}\rceil$.
Then
\begin{align*}
(\ttgc{n}\ast{}\ttgc{n})^\perp
&=
(\langle\allone_l\rangle+(\tgc{n}\ast \tgc{n})\oplus\allzero)^\perp
\\ &=
\langle\allone_l\rangle^\perp\cap
((\tgc{n}\ast \tgc{n})^\perp\oplus\FF_2^{l'})
\\ &=
\langle\allone_l\rangle^\perp\cap
((\tgc{n}+\langle\allone\rangle)\oplus\FF_2^{l'}) && \text{(by Lemma~\ref{lem:mTn})}
\\ &=
\tgc{n}\oplus\langle\allone_{l'}\rangle^\perp
+\langle\allone_l\rangle
\\ &=
\ttgc{n}+\allzero\oplus\langle\allone_{l'}\rangle^\perp.
\end{align*}
Since
$l'<8$,
Lemma~\ref{lem:R1} implies
$\Rad \tgc{n}\subset \tgc{n}$.
The result follows from Lemma~\ref{lem:maxcondbyrad}.
\end{proof}

\section{Triply even codes constructed from pairs of doubly even codes
with isometries}\label{sec:gendoubling}

In Section~\ref{sec:construction}, we gave construction methods
for a triply even code from a doubly even code. In this section, we give
a generalization of these construction methods for a pair of doubly even codes.

For a set of coordinates $\{i_1, i_2, \ldots,i_t\}
\subset\{1,2,\ldots,n\}$,
let $\pi:\FF_2^n\rightarrow\FF_2^t$,
$\pi':\FF_2^n\rightarrow\FF_2^{n-t}$
be the projection to the set of coordinates
$\{i_1, i_2, \ldots,i_t\}$,
$\{j_1,\ldots,j_{n-t}\}$, respectively,
where $\{j_1,\ldots,j_{n-t}\}=\{1,\ldots,n\}\setminus\{i_1,\ldots,i_t\}$.
For a code $C$ of length $n$, the {\em punctured code} and the {\em shortened code} of $C$ on
a set of coordinates
$\{i_1, i_2, \ldots, i_t\}$ are the codes
$\pi'(C)$, $\{\pi'(c)\mid c\in C, \; \pi(c)=\allzero\}$, respectively.

Let $C_1$ and $C_2$ be doubly even codes
and $R_i$ be a subcode of $C_i\cap\Rad C_i$ for $i=1,2$.
A bijective linear map
\begin{equation}\label{eq:isometric}
f:C_1/R_1\to C_2/R_2
\end{equation}
is called an {\em isometry} if
$\wt(x_1)\equiv \wt(x_2) \pmod{8}$ for any $x_1+R_1\in C_1/R_1$ and
$x_2+R_2\in f(x_1+R_1)$.
We note that if $x+R_1 = y+R_1$ with $x, y\in C_1$,
then $\wt(x)\equiv\wt(y)\pmod{8}$.
The set of isometries (\ref{eq:isometric}) is denoted by
$\Phi(C_1/R_1, C_2/R_2)$.

For an isometry $f\in\Phi(C_1/R_1, C_2/R_2)$, we define a code
\begin{equation}\label{eq:consta}
D(C_1, C_2, R_1, R_2, f)=\{(x_1|x_2)\mid x_1\in C_1,\;x_2\in f(x_1+R_1)\}.
\end{equation}
Since $f$ is a bijective linear map,
\begin{equation}\label{eq:constb}
D(C_1, C_2, R_1, R_2, f)=\{(x_1|x_2)\mid x_2\in C_2,\;x_1\in f^{-1}(x_2+R_2)\}.
\end{equation}

\begin{prop}\label{prop:D}
Let $C_i$ be a doubly even code of length $m_i$ for $i=1,2$
and $R_i$ be a subcode of $C_i\cap\Rad C_i$.
If $f\in\Phi(C_1/R_1, C_2/R_2)$,
then the code
$D(C_1, C_2, R_1, R_2, f)$
is a triply even code of length $m_1+m_2$
of dimension $\dim C_1 + \dim R_2 = \dim R_1 + \dim C_2$.
\end{prop}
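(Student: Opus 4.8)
The plan is to deduce everything from the generator criterion of Lemma~\ref{lem:gencond}, after setting up a convenient generating set. First I would record that $D:=D(C_1,C_2,R_1,R_2,f)$ is linear: if $(x_1|x_2),(y_1|y_2)\in D$, then $x_2+y_2\in f(x_1+R_1)+f(y_1+R_1)=f((x_1+y_1)+R_1)$ by linearity of $f$, so the sum again lies in $D$. For the dimension, the projection of $D$ onto the first $m_1$ coordinates is onto $C_1$ with kernel $\{(0|x_2)\mid x_2\in f(R_1)\}=\{0\}\times R_2$, giving $\dim D=\dim C_1+\dim R_2$; reading $D$ through (\ref{eq:constb}) and projecting onto the last $m_2$ coordinates gives $\dim D=\dim C_2+\dim R_1$ (the two agree since $f$ is bijective).

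Next I would replace the coset description of $D$ by the graph of a linear lift. Because the quotient map $C_2\to C_2/R_2$ splits over $\FF_2$, the map $f$ lifts to a \emph{linear} map $g\colon C_1\to C_2$ with $g(x)+R_2=f(x+R_1)$ for every $x\in C_1$. Then $D=\langle S_1\cup S_2\rangle$ with $S_1=\{(x|g(x))\mid x\in C_1\}$ and $S_2=\{(0|r)\mid r\in R_2\}$, and it remains to check (\ref{cnd:gencondi})--(\ref{cnd:gencondiii}) on generators drawn from $S_1\cup S_2$. Conditions (\ref{cnd:gencondi}) and (\ref{cnd:gencondii}) are the routine part. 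For $(x|g(x))\in S_1$ the isometry gives $\wt(g(x))\equiv\wt(x)\pmod 8$, so $\wt(x)+\wt(g(x))\equiv 2\wt(x)\equiv0\pmod 8$ since $C_1$ is doubly even; for $(0|r)\in S_2$ one has $\wt(r)\equiv0\pmod 8$ because $Q(r)=0$. For (\ref{cnd:gencondii}) the key step is to apply the isometry to $x$, $y$ and $x+y$ and expand $\wt(x+y)=\wt(x)+\wt(y)-2\wt(x\ast y)$ on both sides, which yields
\[
\wt(x\ast y)\equiv\wt(g(x)\ast g(y))\pmod 4,
\]
whence $\wt\bigl((x|g(x))\ast(y|g(y))\bigr)\equiv2\wt(x\ast y)\equiv0\pmod 4$; any pairing with a factor from $S_2$ vanishes modulo $4$ since $r\in R_2\subset\rad C_2$ forces $\wt(a\ast r)\equiv0\pmod 4$ for all $a\in C_2$.

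The crux is condition (\ref{cnd:gencondiii}) for three generators from $S_1$, which reduces to the identity $T(x,y,z)=T(g(x),g(y),g(z))$. Here I would exploit that each $C_i$ is doubly even, hence self-orthogonal, so $z\in C_1\subset C_1^\perp$, and (\ref{eq:BT2}) expands $T(x,y,z)=B(x+y,z)+B(x,z)+B(y,z)$, with the same expansion on the $C_2$ side using that $g$ is linear. The congruence $\wt(x\ast y)\equiv\wt(g(x)\ast g(y))\pmod 4$ obtained above says exactly that the bilinear form $B$ of $C_1$ at $(x,y)$ equals that of $C_2$ at $(g(x),g(y))$, so the two expansions of $T$ match term by term. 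Every remaining triple carries a factor $(0|r)$ with $r\in R_2$; since $R_2\subset\Rad C_2\subset\rad C_2\subset(C_2\ast C_2)^\perp$ by Lemma~\ref{lem:R1} and $T$ is symmetric in its arguments, each such triple evaluates to $0$. With all three conditions verified, Lemma~\ref{lem:gencond} shows $D$ is triply even. I expect the term-by-term transfer of $T$ across $f$, via the identification of $B$ with $B\circ(g\times g)$, to be the only genuinely nontrivial step.
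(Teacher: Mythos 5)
Your proof is correct, but it takes a genuinely different and substantially longer route than the paper's. The paper's argument for triple evenness is essentially one line: by the very definition (\ref{eq:consta}), every codeword of $D$ is a pair $(x_1|x_2)$ with $x_2\in f(x_1+R_1)$, so the isometry property gives $\wt(x_2)\equiv\wt(x_1)\pmod 8$ for \emph{every} codeword, not just for generators, whence $\wt((x_1|x_2))=\wt(x_1)+\wt(x_2)\equiv2\wt(x_1)\equiv0\pmod 8$ since $C_1$ is doubly even; the dimension follows from $|D|=|C_1|\cdot|R_2|$, which is equivalent to your projection argument. You instead route everything through the generator criterion of Lemma~\ref{lem:gencond}, which forces you to lift $f$ to a linear map $g\colon C_1\to C_2$ (legitimate, since subspaces over $\FF_2$ split) and to establish the transfer identities $\wt(x\ast y)\equiv\wt(g(x)\ast g(y))\pmod 4$ and $T(x,y,z)=T(g(x),g(y),g(z))$. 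All of these steps are sound: the mod-$4$ transfer follows correctly by polarizing the mod-$8$ isometry congruence, the $T$-transfer via (\ref{eq:BT2}) works because $C_1\subset C_1^\perp$ and $g$ is linear, and the triples involving $S_2$ vanish by $R_2\subset\rad C_2\subset(C_2\ast C_2)^\perp$ (Lemma~\ref{lem:R1}) together with (\ref{eq:Tzero}). What your route buys is the extra observation that an isometry automatically preserves the induced bilinear and trilinear data, which is of independent interest and implicitly underlies why isometries suffice in Section~\ref{sec:gendoubling}; what it costs is efficiency — conditions (\ref{cnd:gencondii}) and (\ref{cnd:gencondiii}) are actually redundant for this proposition, because the coset description of $D$ already encodes the mod-$8$ congruence on all codewords simultaneously, so there is nothing to propagate from a generating set.
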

\begin{proof}
Fix $C_1$, $C_2$, $R_1$, $R_2$ and $f$.
We abbreviate $D(C_1, C_2, R_1, R_2, f)$ as $D$.
Since $f$ is a linear map, $D$ is linear.
Since $C_1$ and $C_2$ are doubly even codes and $f$ is an isometry,
all the weights of elements of $D$ are multiple of $8$, that is,
$D$ is a triply even code.
Moreover
\[
 |D(C_1, C_2, R_1, R_2, f)|=|C_1|\times |f(R_1)|=|C_1|\times|R_2|.
\]
Therefore $\dim D=\dim C_1 + \dim R_2 = \dim R_1 + \dim C_2$.
\end{proof}

Remark that the construction method in Proposition~\ref{prop:D} contains the
constructions $\eD(C)$  in (\ref{eq:eDC}) and  $\teD(C)$ in (\ref{eq:teDC})
as special cases.
Indeed, let $C$ be a doubly even code of length $n$.
Then we have
\begin{align}\label{eq:tdoubling}
\teD(C)&=D(C, C, C\cap\Rad C, C\cap\Rad C, \id)
\end{align}
and if, moreover, $n\equiv 0\pmod{8}$, then
\[
\eD(C)=
D(C+\langle\allone\rangle,C+\langle\allone\rangle,
\langle\allone\rangle,\langle\allone\rangle, \id).
\]

Note that, given doubly even codes $C_1,C_2$ and
subcodes $R_1\subset C_1\cap\Rad C_1$,
$R_2\subset C_2\cap\Rad C_2$,
the set $\Phi(C_1/R_1, C_2/R_2)$ may be empty,
and in this case Proposition~\ref{prop:D} produces no
triply even codes.
We shall give a
necessary and sufficient condition for the set $\Phi(C_1/R_1, C_2/R_2)$
to be non-empty
in Proposition~\ref{prop:singpts} below. First we need
to introduce some terminology.

\begin{dfn}\label{dfn:check}
Let $C$ be a doubly even code, and let $R$ be a subcode
of $C\cap\Rad C$.
Let
\[
X =\{x+R\in C/R\mid \wt(x)\equiv0\pmod{8}\}.
\]
We call the elements of the set $X$
{\em singular points} of $C/R$.
We denote by $\cG_1(C,R)$ the setwise stabilizer
of $X$ in $\GL(C/R)$.
The {\em triply even check code} $\cC(C,R)$
of $(C,R)$ is defined as
\[
\cC(C,R)=\{c=(c_x\in\FF_2\mid x\in X)\in\FF_2^X\mid \sum_{x\in X} c_xx\in R\}.
\]
\end{dfn}
By the definition, $\cG_1(C,R)$ acts on $\cC(C,R)$ as
automorphisms, but the action is not necessarily faithful.
Indeed, $X$ may not span $C/R$.

\begin{prop}\label{prop:singpts}
Let $C_i$ be a doubly even code for $i=1,2$
and $R_i$ be a subcode of $C_i\cap\Rad C_i$.
Suppose that $\dim C_1/R_1 = \dim C_2/R_2$.
Then $\cC(C_1,R_1)\cong\cC(C_2,R_2)$
if and only if
$\Phi(C_1/R_1, C_2/R_2)\neq\emptyset$.
\end{prop}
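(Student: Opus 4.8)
The plan is to establish a dictionary between isometries $f \in \Phi(C_1/R_1, C_2/R_2)$ and linear isomorphisms that carry the set of singular points of $C_1/R_1$ onto that of $C_2/R_2$, and then to show that such a singularity-preserving isomorphism exists precisely when the two check codes are isomorphic. Throughout, write $X_i = \{x+R_i \in C_i/R_i \mid \wt(x) \equiv 0 \pmod 8\}$ for the set of singular points of $C_i/R_i$, as in Definition~\ref{dfn:check}. First I would verify that the notion of an isometry in the sense of \eqref{eq:isometric} is exactly that of a singularity-preserving linear isomorphism: since $R_i \subset C_i \cap \Rad C_i$, the weight modulo $8$ is constant on each coset $x+R_i$ (the remark following \eqref{eq:consta} records this), so the map $\bar{\wt}_i : C_i/R_i \to \ZZ/8\ZZ$, $x+R_i \mapsto \wt(x) \bmod 8$, is well defined, and a linear bijection $f$ is an isometry exactly when $\bar{\wt}_2 \circ f = \bar{\wt}_1$. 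In particular any isometry satisfies $f(X_1) = X_2$, since $X_i = \bar{\wt}_i^{-1}(0)$.

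Next I would reduce the nonemptiness of $\Phi(C_1/R_1, C_2/R_2)$ to the existence of a linear bijection $g : C_1/R_1 \to C_2/R_2$ with $g(X_1) = X_2$. The forward implication is immediate from the previous paragraph. For the converse, given such a $g$ I would argue that it is automatically an isometry: the key observation is that the weight map $\bar{\wt}_i$ takes values only in $\{0,4\} \subset \ZZ/8\ZZ$, because $C_i$ is doubly even, so $\wt(x)$ is always a multiple of $4$ and $\bar{\wt}_i(x+R_i) \in \{0,4\}$. Hence $\bar{\wt}_i$ is determined entirely by its zero set $X_i$: it is $0$ on $X_i$ and $4$ off $X_i$. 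Therefore any linear bijection $g$ with $g(X_1)=X_2$ also satisfies $g(X_1^c)=X_2^c$ and so $\bar{\wt}_2 \circ g = \bar{\wt}_1$, making $g$ an isometry. This step is where the doubly even hypothesis does essential work, and I expect it to be the crux of the argument.

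It then remains to translate the existence of a singularity-preserving linear isomorphism $C_1/R_1 \to C_2/R_2$ into the isomorphism $\cC(C_1,R_1) \cong \cC(C_2,R_2)$ of check codes. I would unwind the definition of $\cC(C_i,R_i) \subset \FF_2^{X_i}$ as the code of all formal $\FF_2$-relations $\sum_{x \in X_i} c_x x \in R_i$ among the singular points, i.e. the kernel of the evaluation map $\FF_2^{X_i} \to (C_i/R_i)$, $c \mapsto \sum_x c_x x$. A linear bijection $g$ with $g(X_1)=X_2$ induces a bijection $X_1 \to X_2$, hence a coordinate permutation $\FF_2^{X_1} \to \FF_2^{X_2}$, and because $g$ is a linear isomorphism respecting the subspaces $R_i$ it carries relations to relations, giving $\cC(C_1,R_1) \cong \cC(C_2,R_2)$. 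For the reverse direction I would start from a code isomorphism (a coordinate bijection $\sigma : X_1 \to X_2$ inducing $\cC(C_1,R_1) \cong \cC(C_2,R_2)$), extend the induced map on the span $\langle X_1 \rangle / R_1 \to \langle X_2 \rangle / R_2$ using that the relation spaces match, and finally extend across a complement, invoking the hypothesis $\dim C_1/R_1 = \dim C_2/R_2$ to match dimensions on the nonsingular part. The main obstacle here is the warning already flagged after Definition~\ref{dfn:check}, namely that $X$ need not span $C/R$: the check code only sees the subspace $\langle X_i \rangle$, so I must treat the complementary directions separately and check that a singularity-preserving bijection can indeed be built on all of $C_i/R_i$, not merely on $\langle X_i \rangle$, using the equality of total dimensions to reconcile the two pieces.
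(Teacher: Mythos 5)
Your proposal is correct and takes essentially the same route as the paper's own proof: the converse direction is the immediate one, and the forward direction extends the bijection $X_1\to X_2$ coming from the check-code equivalence to a linear isomorphism $\langle X_1\rangle\to\langle X_2\rangle$ and then arbitrarily to all of $C_1/R_1$, using the hypothesis $\dim C_1/R_1=\dim C_2/R_2$. Your explicit observation that the coset weight map takes values only in $\{0,4\}$ modulo $8$ (so that any linear bijection carrying $X_1$ onto $X_2$ is automatically an isometry) correctly spells out the point the paper leaves implicit in the phrase ``extending further \dots{} in an arbitrary manner.''
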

\begin{proof}
If $\cC(C_1,R_1)\cong\cC(C_2,R_2)$, then there exists a bijection
$f$ from the set $X_1$ of singular points of $C_1/R_1$ to the set
$X_2$ of those of $C_2/R_2$
which induces an equivalence from $\cC(C_1,R_1)$ to $\cC(C_2,R_2)$.
It follows from the definition of the triply even check code that
the bijection $f$ extends to a linear mapping $\langle X_1\rangle\to
\langle X_2\rangle$. Extending further to $C_1/R_1$ in an arbitrary
manner, we obtain an isometry from $C_1/R_1$ to $C_2/R_2$.
The proof of the converse is immediate.
\end{proof}

The next proposition shows that every triply even code
can be constructed by means of the construction described
in Proposition~\ref{prop:D}.

\begin{prop}\label{prop:3da}
Let $D$ be a triply even code of length $n$.
Fix a codeword $x\in D$ of weight $m_1$ with $0<m_1<n$.
Let $S_1=\support(\allone + x)$ and $S_2=\support(x)$ and
let $C_i$ and $R_i$ be the punctured code and the shortened code of $D$
on $S_i$, respectively, for $i=1,2$.
Then
$C_i$ is doubly even, $R_i\subset C_i\cap\Rad C_i$
for $i=1,2$,
and
\[
 D\cong D(C_1, C_2, R_1, R_2, f)
\]
for some $f\in \Phi(C_1/R_1, C_2/R_2)$.

Moreover, if $D$ is maximal, then $\Rad C_i=R_i$ for $i=1,2$.
\end{prop}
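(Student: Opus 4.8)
The plan is to realize $D$ literally as the glued code of Proposition~\ref{prop:D}, using the fixed codeword $x$ to split the coordinates. Writing $S_2=\support(x)$ and $S_1=\support(\allone+x)$ for its complement, we have $|S_2|=m_1$ and $|S_1|=n-m_1$, so $C_1,R_1$ live on $S_2$ (they are obtained by deleting $S_1$) while $C_2,R_2$ live on $S_1$. The one elementary fact I would record at the outset is that in any triply even code one has $\wt(u\ast v)\equiv 0\pmod 4$ for all $u,v\in D$: this is immediate from $\wt(u+v)=\wt(u)+\wt(v)-2\wt(u\ast v)$ together with $8\mid\wt(u),\wt(v),\wt(u+v)$. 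Essentially all of the divisibility bookkeeping below reduces to this identity and the definitions of $Q$ and $B$ in Definition~\ref{dfn:rad}.

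For the doubly even claim, note that for $c\in D$ the restriction $c|_{S_2}$ has weight $\wt(c\ast x)$ and $c|_{S_1}$ has weight $\wt(c)-\wt(c\ast x)$, both $\equiv0\pmod4$ by the fact above, so $C_1$ and $C_2$ are doubly even. For $R_1\subseteq C_1\cap\Rad C_1$ I would take $r\in R_1$, i.e.\ $r=d|_{S_2}$ for a codeword $d\in D$ with $\support(d)\subseteq S_2$, and check the membership conditions directly: $\wt(r)=\wt(d)\equiv0\pmod8$ gives $Q(r)=0$; and for $u=c|_{S_2}\in C_1$ one has $\wt(u\ast r)=\wt(c\ast d)\equiv0\pmod4$, which yields both $r\in C_1^\perp$ and $B(u,r)=0$, so $r\in\rad C_1$ and hence $r\in\Rad C_1$. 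The symmetric computation on $S_1$ handles $R_2$. Next I would build the isometry: the restriction maps $c\mapsto c|_{S_2}$ and $c\mapsto c|_{S_1}$ induce isomorphisms $D/K\to C_1/R_1$ and $D/K\to C_2/R_2$ with the same kernel $K=\{c\in D\mid c|_{S_2}\in R_1\}=\{c\in D\mid c|_{S_1}\in R_2\}$ (the two descriptions of $K$ agree because such a $c$ splits as a codeword supported on $S_2$ plus one supported on $S_1$), and $f$ is the resulting map $c|_{S_2}+R_1\mapsto c|_{S_1}+R_2$. That $f$ is an isometry is the cleanest point: for $x_1=c|_{S_2}$ and $x_2=c|_{S_1}$ we have $\wt(x_1)+\wt(x_2)=\wt(c)\equiv0\pmod8$, and since $\wt(x_2)\equiv0\pmod4$ this forces $\wt(x_1)\equiv-\wt(x_2)\equiv\wt(x_2)\pmod8$.

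With $f$ in hand, the isomorphism $D\cong D(C_1,C_2,R_1,R_2,f)$ is the coordinate permutation $\sigma$ listing $S_2$ before $S_1$: under $\sigma$, $c\mapsto(c|_{S_2}\mid c|_{S_1})$, and I would check both inclusions against \eqref{eq:consta}. The forward inclusion is immediate from the definition of $f$; for the reverse, given $(x_1\mid x_2)$ with $x_1=c|_{S_2}$ and $x_2=c|_{S_1}+r_2$ where $r_2=d'|_{S_1}$, $\support(d')\subseteq S_1$, the codeword $c+d'\in D$ restricts to $x_1$ on $S_2$ and to $x_2$ on $S_1$.

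The hard part will be the final assertion that maximality of $D$ forces $\Rad C_i=R_i$; this is the only place where genuine structure rather than bookkeeping enters. I would prove the reverse inclusion $\Rad C_1\subseteq R_1$ by a lifting argument. Given $v\in\Rad C_1$, form $\tilde v\in\FF_2^n$ equal to $v$ on $S_2$ and $\allzero$ on $S_1$, and verify $\tilde v\in\Rad D$: for $c\in D$, $\tilde v\cdot c=v\cdot(c|_{S_2})=0$ since $v\in C_1^\perp$; $\wt(c\ast\tilde v)=\wt((c|_{S_2})\ast v)\equiv0\pmod4$ since $B(c|_{S_2},v)=0$, giving $B(c,\tilde v)=0$; and $\wt(\tilde v)=\wt(v)\equiv0\pmod8$ since $Q(v)=0$. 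Thus $\tilde v\in\rad D$ with $Q(\tilde v)=0$, i.e.\ $\tilde v\in\Rad D$. By Lemma~\ref{lem:maxcondbyrad}, maximality gives $\Rad D=D$, so $\tilde v\in D$; being supported on $S_2$, it witnesses $v=\tilde v|_{S_2}\in R_1$. The symmetric argument gives $\Rad C_2=R_2$. The subtleties to keep straight are that this step genuinely uses $\Rad D=D$ (maximality), not merely triple-evenness of $D$, and that it must be run for an \emph{arbitrary} $v\in\Rad C_1$, which is not a priori known to lie in $C_1$, in order to conclude the set equality $\Rad C_1=R_1$.
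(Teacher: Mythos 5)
Your proof is correct and takes essentially the same approach as the paper's: you split the coordinates along $\support(x)$, obtain the isometry from the two restriction maps (the paper defines $f(c_1+R_1)=\{y\in C_2\mid (c_1|y)\in\pi(D)\}$, which is exactly your map through the common quotient $D/K$), and establish the final assertion by the paper's own lifting argument, padding $v\in\Rad C_i$ with zeros to produce an element of $\Rad D$ and invoking Lemma~\ref{lem:maxcondbyrad}. The only difference is that you write out the divisibility bookkeeping that the paper dispatches with a citation of Lemma~\ref{lem:gencond}.
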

\begin{proof}
All the statement except on the last one follows easily from Lemma~\ref{lem:gencond}.
Let $\pi_1 : \FF_2^n\rightarrow\FF_2^{m_1}$, $\pi_2 :
\FF_2^n\rightarrow\FF_2^{n-m_1}$
be the projection to the set of coordinates $\support(x)$,
$\support(\allone+x)$, respectively. Define $\pi:\FF_2^n\rightarrow\FF_2^n$
by $\pi(x) = (\pi_1(x)|\pi_2(x))$ \ ($x\in\FF_2^n$).
Then $C_i=\pi_i(D)$ \ ($i=1,2$) and $D$ is equivalent to $\pi(D)$.
It is clear that the mapping
\begin{align*}
f :  C_1/R_1  &\longrightarrow C_2/R_2 \\
c_1 + R_1 &\mapsto \{ x \in C_2\ \mid (c_1 |  x)\in \pi(D)\}
\end{align*}
is a well-defined isometry and $\pi(D)=D(C_1, C_2, R_1, R_2, f)$.

If $D$ is a maximal triply even code, then so is $\pi(D)$.
This implies $(r_1|\allzero)$, $(\allzero|r_2)\in\Rad \pi(D)$
for $r_1\in \Rad C_1$ and $r_2\in \Rad C_2$.
By Lemma~\ref{lem:maxcondbyrad}, $\Rad \pi(D)\subset \pi(D)$.
Therefore $R_i\subset\Rad C_i$ for $i=1,2$.
Hence the result follows.
\end{proof}



Proposition~\ref{prop:3da} indicates that every triply even code of
length $n$ containing a codeword of weight $m_1$ can be constructed from
a pair of doubly even codes of lengths $m_1$ and $n-m_1$.
We will classify maximal triply even codes of length $48$
by setting $m_1=24$
in Section~\ref{sec:classificationb}.

For fixed codes $C_1$, $C_2$ and
$R_1\subset C_1\cap\Rad C_1$,
$R_2\subset C_2\cap\Rad C_2$
the resulting code
\[
D(C_1, C_2, R_1, R_2, f)
\]
depends on the choice of the isometry $f$.
However, some of these codes are equivalent to each other.
The first algorithm is to check this, that is, we will give a sufficient condition
for two resulting codes to be equivalent.
We need this algorithm to reduce the amount of calculation to be reasonable.

First, we define some groups.
For a code $C$ and a subcode $R\subset C\cap\Rad C$,
we denote by  $\cG_0(C,R)$ the subgroup of $\GL(C/R)$
induced by the action of $\Aut(C)\cap\Aut(R)$ on $C/R$ and
denote by $\cG_1(C,R)$ the subgroup $\Phi(C/R, C/R)$ of $\GL(C/R)$.
By the definition, the group $\cG_0(C,R)$ is a subgroup of $\cG_1(C,R)$.
If $R=C\cap\Rad C$, then we abbreviate $\cG_0(C,R)$,
$\cG_1(C,R)$ as $\cG_0(C)$, $\cG_1(C)$, respectively.
If $f\in\Phi(C_1/R_1, C_2/R_2)$,
then
\begin{align}
\cG_1(C_1, R_1) &= f^{-1}\circ \cG_1(C_2, R_2)\circ f \label{eq:ordereq1}\\
\intertext{and}
\Phi(C_1/R_1, C_2/R_2) &=f\circ \cG_1(C_1, R_1) = \cG_1(C_2, R_2)\circ f. \label{eq:ordereq2}
\end{align}
If we replace $f$ by $\sigma_2\circ f\circ \sigma_1$,
where $\sigma_i\in\cG_0(C_i,R_i)$, then the resulting codes
are equivalent, that is,
\[
D(C_1, C_2, R_1, R_2, f)\cong D(C_1, C_2, R_1, R_2, \sigma_2\circ f\circ \sigma_1).
\]
This means that,
in order to enumerate
\[
\{D(C_1, C_2, R_1, R_2, h)\mid h\in\Phi(C_1/R_1, C_2/R_2)\}
\]
up to equivalence,
we first fix $f\in\Phi(C_1/R_1, C_2/R_2)$, and
it suffices to enumerate the codes
$D(C_1, C_2, R_1, R_2, f \circ g)$
where $g$ runs through a set of representatives for the double cosets
\[
(f^{-1}\circ\cG_0(C_2, R_2)\circ f)\backslash\cG_1(C_1, R_1)/\cG_0(C_1, R_1).
\]

\section{Doubly even codes containing their radicals}\label{sec:algorithms}
In view of Propsition~\ref{prop:3da},
it will be necessary to extract only
those doubly even codes $C$ which
satisfy $\Rad C\subset C$, in order
to enumerate maximal triply even codes.
In this section,
we will give a criteria to verify
whether a doubly even code $C$ contains
its triply even radical i.e., $\Rad C\subset C$.

Throughout this section,
let $C$ be a doubly even code containing $\allone$,
and we denote $(C\ast C)^\perp\cap C$ by $D$.
For $x\in C^\perp$,
one can define a mapping $B_x:C\to\FF_2$ by
$B_x(c)=B(c,x)$ ($c\in C$). By (\ref{eq:BT2}), $B_x$ is linear
when $x\in (C\ast C)^\perp$. Thus we obtain a map
\begin{align*}
\phi:(C\ast C)^\perp &\to \Hom(C, \FF_2) \\
 x &\mapsto B_x.
\end{align*}
By Lemma~\ref{lem:R1}, we can write
\begin{equation}\label{eq:phirad}
\phi^{-1}(0)=\rad C.
\end{equation}
We remark that the map $\phi$ is not linear in general.
More precisely, if we define a bilinear map $\delta$ as
\begin{align*}
\delta:\FF_2^n\times\FF_2^n &\to \Hom(C, \FF_2) \\
 (x,y) &\mapsto (v\mapsto T(x,y,v)),
\end{align*}
then for $x,y\in(C\ast C)^\perp$,
\begin{equation*}
\phi(x+y)=\phi(x)+\phi(y)+\delta(x,y)
\end{equation*}
holds by (\ref{eq:BT1}).
In particular, (\ref{eq:Tzero}) implies
\begin{equation}
\phi(x+y)=\phi(x)+\phi(y)
\quad(x\in(C\ast C)^\perp, \; y\in D),\label{eq:philn}
\end{equation}
and $\phi$ is linear on $D$.

\newcommand{\D}{D}
\newcommand{\bDb}{D}

The function $Q$ from Definition~\ref{dfn:rad}
can also be defined on $\rad C$,
so we denote it by the same $Q$ as follows.
\begin{equation*}\label{eq:newQ}
Q : \rad C\rightarrow \FF_2,\quad u\mapsto\frac{\wt(u)}{4}\bmod{2}.
\end{equation*}
Then $\Rad C=Q^{-1}(0)$, and
\begin{equation}\label{eq:qlin}
Q(x+y)=Q(x)+Q(y)\quad (x\in C\cap\rad C, \; y\in\rad C).
\end{equation}
\begin{lem}\label{lem:outrad}
For a coset $M\in(C\ast C)^\perp/D$,
the following are equivalent.
\begin{enumerate}
\item $\phi(M)\cap\phi(\D)\neq\emptyset$,\label{cnd:memrad1}
\item $\phi(M)=\phi(\D)$,\label{cnd:memrad1a}
\item $M\cap\rad C\ne\emptyset$.\label{cnd:memrad2}
\end{enumerate}
Moreover, if $C\cap\rad C\neq C\cap\Rad C$,
then each of \ref{cnd:memrad1}--\ref{cnd:memrad2} is equivalent to
\begin{enumerate}
\setcounter{enumi}{3}
\item $M\cap\Rad C\neq \emptyset$.\label{cnd:memrad3}
\end{enumerate}
\end{lem}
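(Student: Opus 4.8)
The plan is to prove the cycle of implications $\ref{cnd:memrad1}\Rightarrow\ref{cnd:memrad1a}\Rightarrow\ref{cnd:memrad2}\Rightarrow\ref{cnd:memrad1}$, and then to handle the extra equivalence with \ref{cnd:memrad3} separately under the hypothesis $C\cap\rad C\neq C\cap\Rad C$. The central fact I would lean on is equation (\ref{eq:philn}): since $\phi$ is additive when one argument lies in $D$, the map $\phi$ is constant on each coset of $D$ inside $(C\ast C)^\perp$, but the value shifts by $\phi(D)$ as an \emph{affine subspace} of $\Hom(C,\FF_2)$. Concretely, if $x\in M$ then $\phi(M)=\phi(x)+\phi(D)$, a coset of the linear subspace $\phi(D)$. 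This immediately explains why $\ref{cnd:memrad1}$ and $\ref{cnd:memrad1a}$ should coincide: two cosets of the same subspace $\phi(D)$ in $\Hom(C,\FF_2)$ either are disjoint or are equal. So $\phi(M)\cap\phi(D)\neq\emptyset$ forces $\phi(M)=\phi(D)$.

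\textbf{Connecting $\phi$-values to the radical.}
For $\ref{cnd:memrad1a}\Rightarrow\ref{cnd:memrad2}$, I would pick $x\in M$ and use $\phi(M)=\phi(D)$ together with (\ref{eq:philn}): there is some $y\in D$ with $\phi(x)=\phi(y)$, hence $\phi(x+y)=\phi(x)-\phi(y)=0$, so by (\ref{eq:phirad}) we get $x+y\in\rad C$. Since $y\in D\subset(C\ast C)^\perp$ and $x\in M$, the element $x+y$ lies in the same coset $M$ (as $D$ is the subgroup defining the quotient $(C\ast C)^\perp/D$), giving $M\cap\rad C\neq\emptyset$. The converse $\ref{cnd:memrad2}\Rightarrow\ref{cnd:memrad1}$ runs in reverse: an element $x\in M\cap\rad C$ satisfies $\phi(x)=0\in\phi(D)$ by (\ref{eq:phirad}), so $0\in\phi(M)\cap\phi(D)$, yielding \ref{cnd:memrad1}.

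\textbf{The extra condition under $C\cap\rad C\neq C\cap\Rad C$.}
Here I expect the main subtlety. The direction $\ref{cnd:memrad3}\Rightarrow\ref{cnd:memrad2}$ is trivial since $\Rad C\subset\rad C$. For the converse, suppose $M\cap\rad C\neq\emptyset$, say $u\in M\cap\rad C$; I want to adjust $u$ by an element of $D$ to land in $\Rad C=Q^{-1}(0)$ while staying in $M$. The hypothesis $C\cap\rad C\neq C\cap\Rad C$ guarantees there exists $w\in C\cap\rad C$ with $Q(w)=1$. By the linearity of $Q$ recorded in (\ref{eq:qlin}) (valid since $w\in C\cap\rad C$ and $u\in\rad C$), I have $Q(u+w)=Q(u)+Q(w)=Q(u)+1$. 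Thus exactly one of $u$ or $u+w$ has $Q$-value $0$ and hence lies in $\Rad C$. The remaining point to verify is that $u+w$ stays in the coset $M$, i.e.\ that $w\in D$: this should follow from $w\in C\cap\rad C$ together with Lemma~\ref{lem:R1} (which gives $\rad C\subset(C\ast C)^\perp$), so $w\in C\cap(C\ast C)^\perp=D$. That ensures $u+w\in M\cap\Rad C$, completing the equivalence. The one step deserving care is confirming that the chosen $w$ indeed lies in $D$ rather than merely in $\rad C$; this is where Lemma~\ref{lem:R1} and the definition $D=(C\ast C)^\perp\cap C$ do the essential work.
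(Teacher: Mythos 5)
Your proof is correct and follows essentially the same route as the paper: the (i)--(iii) equivalences via (\ref{eq:phirad}) and (\ref{eq:philn}) (with $\phi(M)$ a coset of the subspace $\phi(D)$), and (iii)$\Rightarrow$(iv) by adding an element $w\in C\cap\rad C$ with $Q(w)=1$, which is exactly the paper's argument, down to your explicit check that $w\in D$ via Lemma~\ref{lem:R1}. The only micro-elision is that $u+w\in\rad C$ (needed for $u+w\in\Rad C$, not just $Q(u+w)=0$) deserves a word; the paper gets it from $\phi(u+w)=\phi(u)+\phi(w)=0$ and (\ref{eq:phirad}), and it is also immediate from Lemma~\ref{lem:R0}\ref{cd:R0_1}, so nothing is genuinely missing.
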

\begin{proof}
Equivalence of \ref{cnd:memrad1}--\ref{cnd:memrad2} follows
immediately from (\ref{eq:phirad}) and (\ref{eq:philn}).
Suppose $C\cap \rad C\neq C\cap\Rad C$.
It suffices to show that \ref{cnd:memrad2} implies \ref{cnd:memrad3}.

Suppose $x\in M\cap\rad C$.
If $Q(x)=0$, then clearly \ref{cnd:memrad3} holds,
so suppose $Q(x)=1$.
By assumption, there exists
$y\in C\cap \rad C$ such that $Q(y)=1$.
Then $x+y\in M$,
$\phi(x+y)=\phi(x)+\phi(y)=0$ by (\ref{eq:philn}),
hence $x+y\in\rad C$.
Moreover, $Q(x+y)=Q(x)+Q(y)=0$ by (\ref{eq:qlin}).
Thus $x+y\in\Rad C$, and hence \ref{cnd:memrad3} holds.
\end{proof}

\begin{prop}\label{prop:RadCneq}
Let $C$ be a doubly even code of length a multiple of eight,
containing $\allone$.
Suppose $C\cap \rad C\neq C\cap\Rad C$.
Then $\Rad C\not\subset C$ if and only if
there exists a coset $M\in(C\ast C)^\perp/D$ satisfying 
$\phi(M)\cap\phi(D)\neq\emptyset$ and $M\neq D$.
\end{prop}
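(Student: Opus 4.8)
### Proof proposal for Proposition~\ref{prop:RadCneq}

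The plan is to characterize the containment $\Rad C \subset C$ in terms of the cosets of $D = (C\ast C)^\perp \cap C$ inside $(C\ast C)^\perp$, and then apply Lemma~\ref{lem:outrad} to translate the condition into a statement about $\phi$. The key observation is that $\Rad C \subset C$ fails precisely when there is some element of $\Rad C$ lying outside $C$, and such an element lives in a coset $M \in (C\ast C)^\perp/D$ other than $D$ itself (since $\Rad C \subset \rad C \subset (C\ast C)^\perp$ by Lemma~\ref{lem:R1}, while $C \cap (C\ast C)^\perp = D$). So the natural reformulation is: $\Rad C \not\subset C$ if and only if some coset $M \neq D$ meets $\Rad C$.

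First I would establish the forward direction. Suppose $\Rad C \not\subset C$, so there exists $x \in \Rad C$ with $x \notin C$. Since $x \in \rad C \subset (C\ast C)^\perp$, the coset $M = x + D$ is a well-defined element of $(C\ast C)^\perp/D$, and $M \neq D$ because $x \notin C$ (note $D \subset C$, so if $M = D$ then $x \in D \subset C$, a contradiction). Moreover $x \in M \cap \Rad C$, so condition \ref{cnd:memrad3} of Lemma~\ref{lem:outrad} holds for this $M$; since $C\cap\rad C \neq C\cap\Rad C$ by hypothesis, Lemma~\ref{lem:outrad} gives that condition \ref{cnd:memrad1}, namely $\phi(M)\cap\phi(D)\neq\emptyset$, also holds. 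This yields the desired coset $M$.

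For the converse, suppose there is a coset $M \neq D$ with $\phi(M)\cap\phi(D)\neq\emptyset$. By Lemma~\ref{lem:outrad} (again using $C\cap\rad C \neq C\cap\Rad C$), condition \ref{cnd:memrad3} holds, so there exists $x \in M \cap \Rad C$. Because $M \neq D$ and $D = C \cap (C\ast C)^\perp$, the element $x$ cannot lie in $C$: if $x \in C$, then $x \in C \cap (C\ast C)^\perp = D$, forcing $M = x + D = D$, contrary to assumption. Hence $x \in \Rad C \setminus C$, so $\Rad C \not\subset C$.

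The main subtlety — and the point I would state most carefully — is the bookkeeping that ties the coset $M \neq D$ to membership in $C$. Everything rests on the two facts that $D = C\cap(C\ast C)^\perp$ and that $\Rad C$ sits inside $(C\ast C)^\perp$, so that "$x \in C$'' is equivalent to "$x \in D$'' for any $x \in \Rad C$, and thus to "$M = D$''. Once this equivalence is pinned down, both directions reduce immediately to Lemma~\ref{lem:outrad}, whose hypothesis $C\cap\rad C \neq C\cap\Rad C$ is exactly what the proposition assumes. I do not expect any genuine obstacle beyond verifying that $D \subset C$ and that the relevant cosets are well defined; the real content has already been absorbed into Lemma~\ref{lem:outrad}.
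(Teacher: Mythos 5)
Your proposal is correct and takes essentially the same route as the paper's proof: both reduce the proposition to the observation that, since $\Rad C\subset\rad C\subset(C\ast C)^\perp$ (Lemma~\ref{lem:R1}) and $D=C\cap(C\ast C)^\perp$, the condition $\Rad C\not\subset C$ holds if and only if $M\cap\Rad C\neq\emptyset$ for some coset $M\in(C\ast C)^\perp/D$ with $M\neq D$, and then invoke Lemma~\ref{lem:outrad} to translate this into $\phi(M)\cap\phi(D)\neq\emptyset$. The paper states this in two terse sentences; your version merely spells out the same bookkeeping (and correctly notes that the hypothesis $C\cap\rad C\neq C\cap\Rad C$ is really only needed for the converse direction, where one passes from $\phi(M)\cap\phi(D)\neq\emptyset$ back to $M\cap\Rad C\neq\emptyset$).
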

\begin{proof}
Since $\Rad C\subset\rad C\subset(C\ast C)^\perp$
by Lemma~\ref{lem:R1},
$\Rad C\not\subset C$
if and only if
$M\cap\Rad C\neq\emptyset$
for some coset $M\in(C\ast C)^\perp/\bDb$
different from $\D$.
The resut then follows from Lemma~\ref{lem:outrad}.
\end{proof}

In view of equivalence of (i) and (ii) in Lemma~\ref{lem:outrad},
one can check the condition $\phi(M)\cap\phi(D)\neq\emptyset$
by testing whether an arbitrarily chosen
element $x\in M$ satisfies $\phi(x)\in\phi(D)$.
Thus, the above proposition gives a convenient criterion for
$\Rad C\subset C$ in terms of coset representatives for
$(C\ast C)^\perp/\bDb$,
provided $C\cap\rad C\neq C\cap\Rad C$.
In the case where $C\cap\rad C=C\cap\Rad C$,
the situation is slightly more complicated.

\begin{lem}\label{lem:outRad}
Suppose $C\cap\rad C=C\cap\Rad C$,
and $M\in(C\ast C)^\perp/D$.
If $M\cap\Rad C\neq\emptyset$, then
\begin{equation}\label{eq:outRad}
M\cap\rad C=M\cap\Rad C.
\end{equation}
\end{lem}
\begin{proof}
By assumption, there exists $x\in M\cap\Rad C$.
Then by Lemma~\ref{lem:RC},
we have
\begin{equation}
M\cap\rad C = x + C\cap\Rad C.\label{eq:rR}
\end{equation}
Since $x\in\Rad C$, (\ref{eq:RC2}) implies
$x+C\cap\Rad C\subset\Rad C$, hence
$M\cap\rad C\subset\Rad C$ by (\ref{eq:rR}).
This proves $M\cap\rad C\subset M\cap\Rad C$,
and the reverse containment is trivial.
\end{proof}
%
%
%
\begin{prop}\label{prop:RadCeq}
Let $C$ be a doubly even code
of length a multiple of eight, containing $\allone$.
Suppose $C\cap\rad C=C\cap\Rad C$.
Let $\{x_1,\dots,x_t\}\subset (C\ast C)^\perp$ be a set of
coset representatives for the cosets $M\in(C\ast C)^\perp/D$
satisfying $\phi(M)\cap\phi(D)\neq\emptyset$ and $M\neq D$.
For each $i\in\{1,\dots,t\}$, choose $y_i\in D$ in such a way
that $\phi(x_i)=\phi(y_i)$.
Then the following are equivalent.
\begin{enumerate}
\item $\Rad C\not\subset C$,\label{cd:RadCeq1}
\item $\wt(x_i+y_i)\equiv0\pmod8$ for some $i\in\{1,\dots,t\}$.
\label{cd:RadCeq2}
\end{enumerate}
\end{prop}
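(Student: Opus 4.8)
The plan is to translate the condition $\Rad C\not\subset C$ into a statement about cosets $M\in(C\ast C)^\perp/D$ that meet $\Rad C$, exactly as in Proposition~\ref{prop:RadCneq}, and then, under the present hypothesis $C\cap\rad C=C\cap\Rad C$, to pin down which cosets those are by a single weight computation. First I would recall from Lemma~\ref{lem:R1} that $\Rad C\subset\rad C\subset(C\ast C)^\perp$, so that $\Rad C\not\subset C$ holds if and only if some coset $M\in(C\ast C)^\perp/D$ with $M\neq D$ satisfies $M\cap\Rad C\neq\emptyset$. By Lemma~\ref{lem:outrad}, equivalence of \ref{cnd:memrad1} and \ref{cnd:memrad2}, such an $M$ has $M\cap\rad C\neq\emptyset$ precisely when $\phi(M)\cap\phi(D)\neq\emptyset$; hence the cosets that can possibly contribute are exactly those represented by $x_1,\dots,x_t$. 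So the whole question reduces to deciding, for each $i$, whether the coset $M_i$ containing $x_i$ actually meets $\Rad C$ and not merely $\rad C$.

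This is where the hypothesis $C\cap\rad C=C\cap\Rad C$ does the work, and this reduction is the step I expect to be the crux. Fix $i$ and suppose first that $M_i\cap\Rad C\neq\emptyset$; then Lemma~\ref{lem:outRad} gives $M_i\cap\rad C=M_i\cap\Rad C$, so every element of $M_i\cap\rad C$ has weight divisible by $8$. In particular $x_i$ itself lies in $M_i\cap\rad C$ once we know $x_i\in\rad C$, which I would secure as follows: since $\phi(x_i)=\phi(y_i)$ with $y_i\in D$, equation~(\ref{eq:philn}) gives $\phi(x_i+y_i)=\phi(x_i)+\phi(y_i)=0$, and by~(\ref{eq:phirad}) this means $x_i+y_i\in\rad C$. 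Because $y_i\in D\subset(C\ast C)^\perp$ and $y_i\in C$, Lemma~\ref{lem:RC}, specifically the description of $x_i+C\cap\rad C$, shows $x_i+y_i$ represents the same relevant coset, so the weight test should be applied to $x_i+y_i$ rather than to $x_i$. Thus $M_i\cap\Rad C\neq\emptyset$ will force $\wt(x_i+y_i)\equiv0\pmod8$, giving the implication \ref{cd:RadCeq1}$\Rightarrow$\ref{cd:RadCeq2} after checking that at least one such $i$ exists.

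For the converse, suppose $\wt(x_i+y_i)\equiv0\pmod8$ for some $i$. Then $x_i+y_i\in\rad C$ as above, and the weight condition says $Q(x_i+y_i)=0$, so $x_i+y_i\in\Rad C$ by the definition of $\Rad C$ as $Q^{-1}(0)$ on $\rad C$. Since $y_i\in D$, the elements $x_i$ and $x_i+y_i$ lie in the same coset $M_i$, and $M_i\neq D$ because $x_i$ was chosen as a representative of a coset different from $D$; hence $M_i\cap\Rad C\neq\emptyset$ with $M_i\neq D$, which forces $\Rad C\not\subset C$. The main obstacle, as noted, is making the weight test land on the correct representative: a naive computation of $\wt(x_i)$ need not detect membership in $\Rad C$, and it is the passage to $x_i+y_i\in\rad C$ via $\phi(x_i)=\phi(y_i)$ together with Lemma~\ref{lem:outRad} that makes the single congruence $\wt(x_i+y_i)\equiv0\pmod8$ both necessary and sufficient. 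Assembling these two implications over $i=1,\dots,t$ yields the stated equivalence.
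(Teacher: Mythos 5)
Your proposal is correct and takes essentially the same route as the paper's proof: both reduce $\Rad C\not\subset C$ to the existence of a coset $M\neq D$ meeting $\Rad C$, locate that coset among the $M_i$ via $\phi(x)=0\in\phi(D)$, and apply Lemma~\ref{lem:outRad} to the element $x_i+y_i\in M_i\cap\rad C$ (obtained from $\phi(x_i+y_i)=0$ via (\ref{eq:philn}) and (\ref{eq:phirad})) to get the weight congruence, with the converse argued identically. Two cosmetic points only: your appeal to Lemma~\ref{lem:RC} is unnecessary, since $x_i+y_i\in M_i$ follows immediately from $y_i\in D$, and the passing claim that ``$x_i$ itself lies in $M_i\cap\rad C$'' should refer to $x_i+y_i$, which is what you in fact use.
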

\begin{proof}
First, we note that $\phi(M)\cap\phi(D)\neq\emptyset$ implies
that $\phi(M)=\phi(D)$ by Lemma~\ref{lem:outrad}. Thus there
exists $y_i\in D$ such that $\phi(x_i)=\phi(y_i)$, no matter
how we choose a representative $x_i$ for the coset $x_i+D$.

Suppose \ref{cd:RadCeq1} holds. Take $x\in\Rad C\setminus C$
and set $M=x+D$.
Then $x\in M\cap\Rad C$, and hence (\ref{eq:outRad}) holds
by Lemma~\ref{lem:outRad}.
Also, as $x\notin D$ and $\phi(x)=0$ by (\ref{eq:phirad}), 
$M=x_i+D$ holds for some $i\in\{1,\dots,t\}$.
Thus $x_i+y_i\in M$, while 
$\phi(x_i+y_i)=\phi(x_i)+\phi(y_i)=0$ by
(\ref{eq:philn}). Therefore,
$x_i+y_i\in M\cap\rad C\subset\Rad C$ by
(\ref{eq:outRad}).
This implies $\wt(x_i+y_i)\equiv0\pmod8$.

Conversely, if \ref{cd:RadCeq2} holds,
then $x_i+y_i\in\Rad C\setminus C$, and hence \ref{cd:RadCeq1} holds.
\end{proof}

\section{Classification of maximal triply even codes of length $48$}\label{sec:classificationb}

In this section, we aim to give a classification of maximal triply even codes of length $48$.
In Section~\ref{sec:maximality} and Section~\ref{sec:triangular_graph},
we gave $10$ distinct maximal triply even codes of length $48$.
Now we show that the list is complete for a classification up to equivalence
applying Proposition~\ref{prop:3da} and \ref{prop:D} for $n=48$ and $m_1=m_2=24$.
To do this, we first need to establish the existence of a
codeword of weight $24$ in any maximal triply even code of length $48$.

\begin{lem}\label{lem:24a}
Let $D$ be a maximal triply even code of length $n$.
Let $\Gamma$ be the graph with vertex set $\{1,\dots,n\}$
and edge set
\[
\{\support(x)\mid x\in D^\perp,\;\wt(x)=2\}.
\]
Then the following hold:
\begin{enumerate}
\item every connected component of $\Gamma$ is a complete
graph with at most $8$ vertices,
\item
if there is a connected component of $\Gamma$ with more than
$4$ vertices, then any other connected component has at most
$3$ vertices.
\end{enumerate}
\end{lem}
\begin{proof}
Since $D^\perp$ is a linear code, it is clear that
every connected component of $\Gamma$ is a complete
graph.
Suppose that there is a connected component $K$ of $\Gamma$
with $|K|>8$. Then there exists a vector $x\in\FF_2^n$ with
$\wt(x)=8$ and $\support(x)\subset K$. Since
the restriction of $y$ to $K$ is $\allzero$ or $\allone$ for any $y\in D$,
we have $\wt(x*y)=0$ or $8$. This implies that $\langle D,x\rangle$
is triply even.
Taking $i\in\support(x)$ and $j\in K\setminus\support(x)$, the
vector with support $\{i,j\}$ belongs to $D^\perp$ and is not
orthogonal to $x$. Thus $x\notin D$. This contradicts
the fact that $D$ is maximal, and the proof of (i) is complete.

To prove (ii),
suppose that there are distinct
connected components $K,K'$ of $\Gamma$
with $|K|>4$ and $|K'|\geq4$.
Then there exists a vector $x\in\FF_2^n$ with
$\wt(x)=8$, $|\support(x)\cap K|=|\support(x)\cap K'|=4$. Since
the restriction of $y$ to $K$ or $K'$ is $\allzero$ or $\allone$ for any $y\in D$,
we have $\wt(x*y)=0,4$ or $8$. This implies that $\langle D,x\rangle$
is triply even.
The rest of the proof is exactly the same as (i).
\end{proof}

\begin{lem}\label{lem:24}
Let $D$ be a maximal triply even code of length $48$
containing $\allone$.
Then $D$ has at least one codeword of weight $24$.
\end{lem}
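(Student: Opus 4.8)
The plan is to argue by contradiction: suppose $D$ contains no codeword of weight $24$. Since $D$ is triply even, every weight is a multiple of $8$, and since $\allone\in D$ the weight enumerator is symmetric under $w\mapsto 48-w$; hence under our assumption all weights lie in $\{0,8,16,32,40,48\}$. The first move is to extract the structural information from Lemma~\ref{lem:24a}. If $\{i,j\}=\support(z)$ for a weight-$2$ codeword $z\in D^\perp$, then $y_i=y_j$ for every $y\in D$; so on each connected component $K$ of the graph $\Gamma$ every codeword restricts to $\allzero$ or $\allone$. Thus the coordinates partition into complete-graph components $K_1,\dots,K_r$ with sizes $k_i\le 8$ (at most one exceeding $4$, the rest at most $3$ in that case, by Lemma~\ref{lem:24a}), and each codeword is a disjoint union of some of these components, of weight $\sum_{i\in J}k_i$. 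Finding a weight-$24$ codeword thus amounts to selecting components of total size $24$ that form a codeword.

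Next I would bring in maximality. For a size-$8$ component $K$, since every $y\in D$ is constant on $K$, one checks that $\allone_K\ast y$ and $\allone_K\ast y\ast y'$ have weight in $\{0,8\}$ for all $y,y'\in D$; by Lemma~\ref{lem:gencond} the code $\langle D,\allone_K\rangle$ is triply even, so maximality (Lemma~\ref{lem:maxcondbyrad}) forces $\allone_K\in D$. This yields disjoint weight-$8$ codewords, one per size-$8$ component; three pairwise-disjoint weight-$8$ words would sum to weight $24$, so there are at most two size-$8$ components. More generally, for any $u,v\in D$ triple-evenness gives $\wt(u\ast v)\equiv 0\pmod 4$, so a weight-$8$ word and a weight-$16$ word that are disjoint would sum to a weight-$24$ word; hence no such disjoint pair exists, and similarly disjointness patterns among the weight-$8$ and weight-$16$ words are severely constrained. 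These constraints limit how the component sizes may be distributed.

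The final step is a case analysis on the size profile $(k_1,\dots,k_r)$, subject to $\sum_i k_i=48$, to the restrictions of Lemma~\ref{lem:24a} (at most one component of size $\ge 5$, the remaining ones of size $\le 3$), and to the maximality constraints above; in each admissible profile I would exhibit a subfamily of components of total size $24$ forming a codeword, or else derive a contradiction. The subcase in which the minimum nonzero weight is $16$ (all weights divisible by $16$) is handled separately: it rules out size-$8$ components, after which the small components must still be combinable into total size $24$. I expect the main obstacle to be exactly this bookkeeping — organizing the profiles, in particular the configuration of a single large component of size $5$ through $8$ accompanied by many components of size at most $3$, so that avoidance of weight $24$ is shown to be impossible in every case.
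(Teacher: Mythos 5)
There is a genuine gap, and it is structural: your case analysis on the component-size profile $(k_1,\dots,k_r)$ cannot close, because the profile carries almost no information about which unions of components are codewords of $D$. The components of $\Gamma$ only record equalities $y_i=y_j$ forced by weight-$2$ words of $D^\perp$; in the extreme (and a priori possible) case that $D^\perp$ has few or no weight-$2$ codewords, all or most components are singletons, every profile constraint from Lemma~\ref{lem:24a} is vacuously satisfied, and your plan offers no mechanism to ``exhibit a subfamily of components of total size $24$ forming a codeword'' or to derive a contradiction. Your maximality trick is correct where it applies --- for a size-$8$ component $K$ one indeed gets $\wt(\allone_K\ast y)\in\{0,8\}$ for all $y\in D$, so $\langle D,\allone_K\rangle$ is triply even by Lemma~\ref{lem:gencond} and maximality forces $\allone_K\in D$ --- but the same argument fails for unions of small components (e.g.\ components of sizes $3,3,2$ give $\wt(\allone_S\ast y)$ possibly $3$), so it never reaches the profiles consisting of small components, which are exactly the hard case.

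The paper's proof supplies the missing global ingredient, which is counting rather than structure. It uses Lemma~\ref{lem:24a} only to bound the number of weight-$2$ codewords of $D^\perp$, namely $\sum_K\binom{|K|}{2}\leq\max\bigl(\binom{8}{2}+13\binom{3}{2},\,12\binom{4}{2}\bigr)=72$. Then, assuming no weight-$24$ codeword, the weight enumerator of $D$ must be $X^{48}+aX^{40}Y^8+(2^{k-1}-(1+a))(X^{32}Y^{16}+X^{16}Y^{32})+aX^8Y^{40}+Y^{48}$ (here your observations that $\allone\in D$ forces the symmetry $w\mapsto 48-w$ and that all weights are multiples of $8$ are used), and the MacWilliams identities give $3\cdot2^{8-k}a+104+2^{11-k}>72$ weight-$2$ codewords in $D^\perp$ --- a contradiction. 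Without a MacWilliams-type computation (or some equivalent counting linking the assumed weight distribution of $D$ to the number of weight-$2$ dual words), no amount of bookkeeping over size profiles can rule out a code whose dual simply has no weight-$2$ words at all; this is the step your proposal is missing.
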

\begin{proof}
By Lemma~\ref{lem:24a}, the number of codewords of $D^\perp$
with weight $2$ is of the form
\[
\sum_{K}\binom{|K|}{2},
\]
where the summation is taken over the set of connected components
of the graph $\Gamma$ defined in Lemma~\ref{lem:24a}. Let
$\lambda_1\geq\lambda_2\geq\cdots$ be the partition of $48$
associated with the decomposition of the vertex set of $\Gamma$
into connected components. Lemma~\ref{lem:24a} implies that
one of the following holds:
\begin{enumerate}
\item $4<\lambda_1\leq 8$ and $\lambda_i\leq3$ for all $i\geq2$,
\item $\lambda_i\leq 4$ for all $i\geq1$.
\end{enumerate}
It is not difficult to show that the maximum value of
$\sum_i\binom{\lambda_i}{2}$ is $\binom{8}{2}+13\binom{3}{2}=
67$ for the case (i), and $12\binom{4}{2}=72$ for the case (ii).
Therefore, we conclude that $D^\perp$ has at most $72$
codewords of weight $2$.

Now suppose that $D$ has no codeword of weight $24$, so that
its weight enumerator is
\[
X^{48}+aX^{40}Y^8+(2^{k-1}-(1+a))(X^{32}Y^{16}+X^{16}Y^{32})
+aX^8Y^{40}+Y^{48},
\]
where $k=\dim D$.
It follows from the MacWilliams identities that the
number of codewords of weight $2$ in $D^\perp$ is
\[
3\cdot2^{8-k}a+ 104 + 2^{11-k}
\]
which is certainly greater than $72$. This is a contradiction.
\end{proof}

In order to construct all maximal triply even codes of length $48$
by means of Proposition~\ref{prop:D} and \ref{prop:3da} for $n=48$ and $m_1=m_2=24$,
it suffices to consider the codes of length $24$ satisfying
$\Rad C_i\subset C_i$ as candidates for $C_1$ and $C_2$.
This is because, if a resulting code $D(C_1, C_2, R_1, R_2, f)$
is maximal, then
$R_i=\Rad C_i$ for $i=1,2$ as we mentioned in Proposition~\ref{prop:3da}.


We are now ready to describe our enumeration
using {\sc Magma} system \cite{MAGMA}.

As the first step, we enumerate all doubly even codes
of length $24$ containing its triply even radical.
Since there is a database of doubly even codes \cite{Miller},
we could make use of it and extract only those which contain
the triply even radical.
However, since every doubly even code is equivalent to a subcode
of the nine doubly even self-dual codes of length $24$ \cite{PS},
we can find all the desired doubly even codes by successively
taking subcodes of codimension one starting from the doubly
even self-dual codes. This approach has an advantage that once
we encounter a doubly even code $C$ with $\Rad C\not\subset C$,
then $\Rad C'\not\subset C'$ for any subcode $C'$ of $C$, so
that it is no longer necessary to consider subcodes of $C$ by Lemma~\ref{lem:maxcondbyrad}.
Table~\ref{tab:doublyeven} gives the numbers of doubly even codes
of length $24$ containing its triply even radical
with each given dimension and dimension of its triply even radical.

\begin{table}[thb]
\caption{The numbers of doubly even code $C$ of length $24$ with $\Rad C\subset C$}
\label{tab:doublyeven}
\begin{center}
\begin{tabular}{|c|c|c|c|c|c|c|}
\hline
$\dim C\setminus\dim\Rad C$ & 1 &2&3&4&5&6\\
\hline
12&  7& 1& 1& 0&0&0\\ \hline
11& 33& 6& 3& 0&0&0\\ \hline
10&130&19&10& 1&0&0\\ \hline
 9&308&40&23& 5&0&1\\ \hline
 8&363&37&25&10&1&1\\ \hline
 7&180&16&10&11&2&1\\ \hline
 6& 27& 2& 0& 4&2&1\\ \hline
 5&  0& 0& 0& 0&1&0\\ \hline

\end{tabular}
\end{center}
\end{table}

As the second step, we enumerate all resulting codes
\[
D(C_1, C_2, \Rad C_1, \Rad C_2, f\circ g)
\]
obtained from the all combinations of doubly even codes $C_1$, $C_2$ above
and a representative $g\in\bar{g}$
for each double coset
\[
\bar{g}\in(f^{-1}\circ\cG_0(C_2, R_2)\circ f)\backslash\cG_1(C_1, R_1)/\cG_0(C_1, R_1),
\]
where $f$ is a fixed element of $\Phi(C_1/\Rad C_1, C_2/\Rad C_2)$
by the procedure given in Proposition~\ref{prop:D}


We denote the set of doubly even codes of length $24$ by
\[
\Delta = \{g_{24}, d_{24}^{+}, d_{12}^{2+}, (d_{10}e_7^2)^{+},
d_8^{3+}, d_6^{4+}, d_4^{6+}, d_{16}^{+}\oplus e_8, e_8^{\oplus3}\},
\]
in accordance with the notation of \cite{PS}.
From the combinations with $C_1=C_2$,
we obtain $1482$ triply even codes.
However, many codes of them of the form (\ref{eq:tdoubling})
turn out not to be maximal. This is because,
if there is a doubly even code $C'$ such that $C\subsetneq C'$
and $\Rad C=\Rad C'$, then
$\teD(C)\subsetneq\teD(C')$. Therefore,
we find that only $216$ codes among the $1482$ codes are possibly maximal.
Then we use Lemma~\ref{lem:maxcondbyrad} to
check maximality, and we are able to confirm that only $30$ codes
among them are maximal.
Each of these $30$ codes turns out to be equivalent to
$\teD(C)$ for some $C\in\Delta$.


From the combinations with $C_1\not\cong C_2$,
we obtain $225$ triply even codes,
and $5$ codes among them are maximal.
One code is equivalent to $\ttgc{10}$.
The other codes are equivalent to a member of
$\{\teD(C) \mid C\in\Delta\}$.
Therefore we obtain the following theorem.
\begin{thm}\label{thm:main}
Every maximal triply even code of length $48$
is equivalent to
$\teD(C)$ for some $C\in\Delta$
or $\ttgc{10}$.
\end{thm}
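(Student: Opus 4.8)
The plan is to reduce the classification to a finite computer-assisted enumeration, using the structural results built up in the earlier sections. The key insight is Proposition~\ref{prop:3da}: every maximal triply even code $D$ of length $48$ that contains a codeword of weight $m_1$ with $0<m_1<48$ must be of the form $D(C_1, C_2, R_1, R_2, f)$, where $C_i$ is the puncturing of $D$ and $R_i=\Rad C_i$ by maximality. Thus my first step is to guarantee that we may take $m_1=24$. This is precisely the content of Lemma~\ref{lem:24}: every maximal triply even code of length $48$ containing $\allone$ has a codeword of weight $24$. I would note that any maximal triply even code of length $48$ must contain $\allone$, since by Lemma~\ref{lem:maxcondbyrad} maximality forces $\allone\in\Rad D\subset D$ once one checks $\allone\in(D\ast D)^\perp$; then Lemma~\ref{lem:24} applies. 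Fixing a weight-$24$ codeword gives $m_1=m_2=24$, so both $C_1$ and $C_2$ are doubly even codes of length $24$.

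Next, I would invoke the maximality clause of Proposition~\ref{prop:3da}, which forces $R_i=\Rad C_i\subset C_i$ for $i=1,2$. This is the crucial reduction: the candidate codes $C_1,C_2$ are not arbitrary doubly even codes of length $24$, but only those satisfying $\Rad C_i\subset C_i$. Using the fact that every doubly even code embeds as a subcode of one of the nine doubly even self-dual codes of length $24$ classified in \cite{PS}, and exploiting the monotonicity observation that $\Rad C\not\subset C$ propagates to all subcodes (a consequence of Lemma~\ref{lem:maxcondbyrad} together with the criteria of Section~\ref{sec:algorithms}), I would enumerate all such $C_i$ by descending through subcodes of codimension one. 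This produces the finite list tabulated in Table~\ref{tab:doublyeven}.

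The third step is the double-coset enumeration. By the equivalence-reduction discussion preceding Section~\ref{sec:algorithms}, for each ordered pair $(C_1,C_2)$ admitting an isometry it suffices to range $g$ over representatives of the double cosets $(f^{-1}\circ\cG_0(C_2, R_2)\circ f)\backslash\cG_1(C_1, R_1)/\cG_0(C_1, R_1)$ and form $D(C_1, C_2, \Rad C_1, \Rad C_2, f\circ g)$. Proposition~\ref{prop:singpts} tells us exactly when $\Phi(C_1/R_1, C_2/R_2)$ is nonempty, namely when the triply even check codes agree, so the search is organized along equivalence classes of $\cC(C_i,R_i)$. I would split into the cases $C_1\cong C_2$ and $C_1\not\cong C_2$, discard the non-maximal outputs using Lemma~\ref{lem:maxcondbyrad}, and collect the maximal survivors. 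The diagonal case yields codes of the doubling type $\teD(C)$ (many of which fail maximality because $\Rad C=\Rad C'$ for a strictly larger $C'$), while the off-diagonal case produces, in addition to further copies of $\teD(C)$, the single new code $\ttgc{10}$.

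The main obstacle is ensuring that the enumeration is genuinely exhaustive and correctly deduplicated. Completeness rests on Proposition~\ref{prop:3da} applying to \emph{every} maximal code (guaranteed by Lemma~\ref{lem:24}), and on the claim that restricting to $C_i$ with $\Rad C_i\subset C_i$ loses nothing (guaranteed by the maximality clause of Proposition~\ref{prop:3da}). The deduplication relies on the double-coset argument capturing all equivalences arising from the choice of $f$; what remains is to verify that no two codes produced from genuinely different pairs $(C_1,C_2)$ or different double cosets are accidentally equivalent, which is settled by a direct \textsc{Magma} equivalence test \cite{MAGMA} on the finitely many survivors. Once the computation confirms that exactly the ten codes $\{\teD(C)\mid C\in\Delta\}\cup\{\ttgc{10}\}$ arise up to equivalence, the theorem follows.
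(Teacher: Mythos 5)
Your proposal is correct and follows essentially the same route as the paper's proof: Lemma~\ref{lem:24} supplies a weight-$24$ codeword, Proposition~\ref{prop:3da} (with its maximality clause $R_i=\Rad C_i$) reduces the problem to pairs of length-$24$ doubly even codes containing their triply even radicals, and the double-coset enumeration split into the cases $C_1=C_2$ and $C_1\not\cong C_2$, followed by computer checks of maximality and equivalence, completes the classification exactly as in Section~\ref{sec:classificationb}. Your explicit observation that every maximal triply even code of length $48$ automatically contains $\allone$ (since $\allone\in\Rad D=D$ by Lemma~\ref{lem:maxcondbyrad}, as $B(x,\allone)=\wt(x)/2\equiv0$ and $Q(\allone)=12\equiv0\pmod 2$), so that Lemma~\ref{lem:24} applies, correctly fills in a hypothesis the paper leaves implicit.
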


The computer calculation needed to establish Theorem~\ref{thm:main}
was done by magma \cite{MAGMA}
under the environment using
Intel{\tiny \textregistered} 
Core{\tiny \texttrademark} 2 Duo CPU T7500 @ 2.20GHz, and it took
650.240 seconds.

\section{Classification of maximal triply even codes of lengths $8$,
 $16$, $24$, $32$ and $40$}\label{sec:classificationc}

In this section, we give a classification of maximal triply even codes
of lengths $8$, $16$, $24$, $32$ and $40$ by using a shortening process
from the results of maximal triply even codes of length $48$ in the previous sections.

It is easy to see that every maximal triply even code of length $n$
is a shortened code
of a maximal triply even code of length $n+1$.
From the list of maximal triply even codes of length $48$,
we can derive the list of all maximal triply even codes of shorter lengths
by the shortening process.
The shortened code of $\teD(C)$ on one coordinate has an odd length,
so it cannot be of the form $\teD(C')$ for any $C'$. However,
for lengths divisible by $8$,  the following holds.

\begin{thm}\label{thm:classshorter}
For $n=4, 8, 12, 16$ and\/ $20$,
every maximal triply even code of length $2n$ is of the form $\teD(C)$
for some maximal doubly even code $C$ of length $n$.
\end{thm}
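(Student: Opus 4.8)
The plan is to descend from length $48$ one coordinate at a time, taking Theorem~\ref{thm:main} as the base of the induction. Since every maximal triply even code of length $m$ is a shortened code of some maximal triply even code of length $m+1$, once all maximal triply even codes of length $m+1$ are known up to equivalence, one obtains all of length $m$ by shortening on each coordinate, discarding the non-maximal results (maximality being tested by the criterion $D=\Rad D$ of Lemma~\ref{lem:maxcondbyrad}), and reducing modulo coordinate permutations. Starting from the ten maximal triply even codes of length $48$, namely $\teD(C)$ for $C\in\Delta$ together with $\ttgc{10}$, this produces in turn the complete lists of maximal triply even codes of lengths $47,46,\dots,8$; the computation is the one recorded in \cite{BD}.

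It then remains to examine the lists at the five lengths $2n\in\{8,16,24,32,40\}$ and to recognise every code on them as a generalized doubling of a maximal doubly even code of length $n$. Conceptually this is explained by Proposition~\ref{prop:3da}: for a maximal triply even code $D$ of length $2n$ possessing a codeword of weight $n$, splitting on that codeword yields $D\cong D(C_1,C_2,\Rad C_1,\Rad C_2,f)$ with $C_1,C_2$ doubly even of length $n$ and $\Rad C_i\subset C_i$, after which one checks that $C_1$ and $C_2$ are equivalent to a common maximal doubly even code $C$ and that $f$ falls in the double coset of the identity, so that $D\cong\teD(C)$ by (\ref{eq:tdoubling}). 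In practice I would make the identification directly, matching each entry of the list against the explicit family $\{\teD(C)\mid C\text{ maximal doubly even of length }n\}$.

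The main obstacle, and indeed the whole content of the theorem, is the exceptional code $\ttgc{10}$: it is the unique maximal triply even code of length $48$ that is not a generalized doubling, so the assertion is precisely that this exceptional behaviour evaporates under shortening, every maximal shortening of $\ttgc{10}$ (as well as of the $\teD(C)$) down to length at most $40$ becoming some $\teD(C')$. The delicate point is to confirm that no maximal code of the general form $D(C_1,C_2,\Rad C_1,\Rad C_2,f)$ with $f$ outside the identity double coset survives at these lengths, and to pin down exactly which maximal doubly even codes $C'$ of length $n$ occur; both are settled by the enumeration above.
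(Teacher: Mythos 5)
Your proposal is correct and follows essentially the same route as the paper: the paper gives no separate written proof of Theorem~\ref{thm:classshorter}, but establishes it exactly as you describe, by iterated shortening from the ten maximal triply even codes of length $48$ (Theorem~\ref{thm:main}), testing maximality via Lemma~\ref{lem:maxcondbyrad}, reducing up to equivalence, and identifying every surviving code at lengths $8,16,24,32,40$ with some $\teD(C)$, the computation being recorded in Table~\ref{tab:eight} and the database \cite{BD}. Your conceptual remarks (justifying the one-coordinate extension step and invoking Proposition~\ref{prop:3da} for the splitting) fill in exactly the points the paper leaves implicit.
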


Table~\ref{tab:eight} gives the numbers of
the maximal triply even codes of lengths $8, 16, 24, 32$ and $40$,
up to equivalence.

In Table~\ref{tab:eight}, the first and fifth columns indicate the length of
each doubly even code and each triply even code, respectively.
The second and sixth columns indicate the dimension as well.
The third column indicates the number of indecomposable components
of the doubly even code.
The fourth and seventh columns indicate the number of codes satisfying the condition.
The eighth column gives the other construction method to obtain it.

Note that if $C$ is some maximal doubly even code
and $k$ is the number of self-dual
indecomposable components of $C$, then
$\dim\teD(C)=\dim C+k$ by (\ref{eq:dimteD}) and (\ref{eq:isdstariix}).
For example, there is a unique doubly even $[20,9]$ code $C$ which is
the direct sum of three indecomposable codes, two of which are
self-dual. Then $\teD(C)$ is a triply even $[40,11]$ code. Similarly,
there is a unique doubly even $[24,12]$ code $C$ which is
the direct sum of three indecomposable self-dual codes.
Then $\teD(C)$ is a triply even $[48,15]$ code.

\begin{table}[thb]
\caption{The numbers of maximal triply even codes of lengths multiple of $8$ up to $48$}
\label{tab:eight}
\begin{center}
  \begin{tabular}{|c|c|c|c||c|c|c|l|}
    \hline
     \multicolumn{4}{|l||}{maximal doubly even codes} & \multicolumn{4}{l|}{maximal triply even codes} \\
    \hline
     len & dim & \#compos & \#codes & len & dim & \#codes & remark \\
     \hline
      4  &  1 & 1 & 1 &        8 &  1 & 1 &  \\
     \hline
      8  &  4 & 1 & 1 &       16 &  5 & 1 & $\ttgc{6}$\\
     \hline
     12  &  5 & 1 & 1 &       24 &  5 & 1 & \\
         &    & 2 & 1 &          &  6 & 1 & \\
     \hline
     16  &  8 & 1 & 1 &       32 &  9 & 1 & \\
         &    & 2 & 1 &          & 10 & 1 & \\
     \hline
     20  &  9 & 1 & 7 &       40 &  9 & 7 & \\
         &    & 2 & 2 &          & 10 & 2 & \\
         &    & 3 & 1 &          & 11 & 1 & \\
     \hline
     24  & 12 & 1 & 7 &       48 & 13 & 7 & \\
         &    & 2 & 1 &          & 14 & 1 & \\
         &    & 3 & 1 &          & 15 & 1 & \\
     \hline
         &    &   &   &       48 &  9 & 1 & $\ttgc{10}$ \\
     \hline
  \end{tabular}
\end{center}
\end{table}

\subsection*{Acknowledgments}

The authors would like to thank
Ching Hung Lam, Masaaki Harada and Hiroki Shimakura
for helpful discussions, and Jon-Lark Kim for
bringing the paper \cite{Liu} to the authors' attention.

\newpage
\appendix
/* 
%
\section{A Magma program for classification}

\subsection*{Enumeration of doubly even codes of length 24}
This appendix gives {\sc Magma} scripts to verify Theorem~\ref{thm:main}
in Section~\ref{sec:classificationb}.

It is known that
there are precisely $9$ doubly even self-dual codes
of length $24$ up to equivalence \cite{PS}.
The object \verb+desd24genmats+ is the list of generator matrices in the hexadecimal expression.
Also the object \verb+desd24+ is the list of the codes
\[
\Delta = \{g_{24}, d_{24}^{+}, d_{12}^{2+}, (d_{10}e_7^2)^{+},
d_8^{3+}, d_6^{4+}, d_4^{6+}, d_{16}^{+}\oplus e_8, e_8^{\oplus3}\}.
\]
\begin{lstlisting}[firstline=2,name=prog,firstnumber=1]
******/
desd24genmats:=[
[ 0xC75001, 0x49F002, 0xD4B004, 0x6E3008, 0x9B3010, 0xB66020,
  0xECC040, 0x1ED080, 0x3DA100, 0x7B4200, 0xB1D400, 0xE3A800 ],
[ 0x7FE801, 0x802802, 0x804804, 0x808808, 0x810810, 0x820820,
  0x840840, 0x880880, 0x900900, 0xA00A00, 0xC00C00, 0xFFF000 ],
[ 0x7E0F81, 0xFC0082, 0xFC0104, 0xFC0208, 0xFC0410, 0xFC0820,
  0x820FC0, 0x861000, 0x8A2000, 0x924000, 0xA28000, 0xC30000 ],
[ 0xD003C1, 0xD1A042, 0xD1A084, 0xD1A108, 0xD1A210, 0x01A3E0,
  0x00E400, 0x01C800, 0x017000, 0x720000, 0xE40000, 0xB80000 ],
[ 0x7800E1, 0x88F022, 0x88F044, 0x88F088, 0xF0F0F0, 0x78E100,
  0x78D200, 0x78B400, 0x787800, 0x990000, 0xAA0000, 0xCC0000 ],
[ 0xE24031, 0x738012, 0x738024, 0x91C038, 0x938C40, 0xE1C480,
  0xE1C900, 0x724E00, 0x02D000, 0x036000, 0xB40000, 0xD80000 ],
[ 0xCC6009, 0x66A00A, 0xAAC00C, 0xC6C090, 0x6A60A0, 0xACA0C0,
  0x6CC900, 0xA66A00, 0xCAAC00, 0x00F000, 0x0F0000, 0xF00000 ],
[ 0x0000B1, 0x0000E2, 0x000074, 0x0000D8, 0x7E8100, 0x828200,
  0x848400, 0x888800, 0x909000, 0xA0A000, 0xC0C000, 0xFF0000 ],
[ 0x0000B1, 0x0000E2, 0x000074, 0x0000D8, 0x00B100, 0x00E200,
  0x007400, 0x00D800, 0xB10000, 0xE20000, 0x740000, 0xD80000 ]];

desd24:=
   [LinearCode<GF(2),24|[Prune(Intseq(n+0x1000000,2)) : n in code]>
     : code in desd24genmats];/******
\end{lstlisting}
The function \verb+subcodes+ takes a doubly even code
$C$ containing $R$ as an argument, and returns
the list of subcodes of codimension $1$ of $C$ satisfying
$C\supset R$ up to the action of $\Aut(C)$.
\begin{lstlisting}[firstline=2,name=prog]
******/
subcodes:=function(C,R)
   A:=AutomorphismGroup(C);
   P:=PermutationModule(A,GF(2));
   DC:=Dual(C);
   DR:=Dual(R);
   PDC:=sub<P | VectorSpace(DC)>;
   PDR,e:=sub<P | VectorSpace(DR)>;
   M,p:=quo<PDR | PDC>;
   G:=MatrixGroup(M);
   X:=[DR| o[1] @@ p @ e : o in Orbits(G) | not 0 in o];
   overcodes:=[sub<DR|DC,x> : x in X];
   return [Dual(CC) : CC in overcodes];
end function;/******
\end{lstlisting}
Given a sequence of pairs of a code and a number,
the function \verb+uptoequivalenceDE+
returns a subsequence of complete representatives of codes up to equivalence
with the largest numbers appearing in the second components.
\begin{lstlisting}[firstline=2,name=prog]
******/
uptoequivalenceDE:=function(Ds)
   Css:=[];
   for D in Ds do
      ord := #AutomorphismGroup(D[1]);
      we  := WeightEnumerator(D[1]);
      if not exists(v){i:i in [1..#Css]|
           Css[i][1] eq ord and
           Css[i][2] eq we and
           IsEquivalent(Css[i][3], D[1])} then
         Append(~Css, <ord, we, D[1], D[2]>);
      else
         Css[v][4]:=Max([Css[v][4], D[2]]);
      end if;
   end for;
   return [<D[3],D[4]>: D in Css ];
end function;/******
\end{lstlisting}
\subsection*{Basic operation for codes}
Given a pair of vectors, the functions \verb+entrywiseProduct+ and \verb+CstarC+
return $c_1\ast c_2 = c_1\cap c_2 $ as the support
and $C\ast C =\langle c_1\ast c_2 \mid c_1, c_2\in C \rangle$ respectively.
\begin{lstlisting}[firstline=2,name=prog]
******/
entrywiseProduct:=func<x,y|
   CharacteristicVector(Parent(x), Support(x) meet Support(y))>;

CstarC:=function(D)
   k:=Dimension(D);
   CC:=LinearCode<GF(2), Length(D)|
      [entrywiseProduct(D.i,D.j):i,j in [1..k] | i lt j] cat
      [D.i : i in [1..k]]>;
   return CC;
end function;/******
\end{lstlisting}
Given codewords $x, y$ of a doubly even code $C$, the functions \verb+QForm+
and \verb+BForm+ return $Q(x)$ and $B(x,y)$ respectively.
\begin{lstlisting}[firstline=2,name=prog]
******/
QForm:=func<u|GF(2)!(Weight(u) div 4)>;
BForm:=func<u,v|GF(2)!(#(Support(u) meet Support(v)) div 2)>;/******
\end{lstlisting}
Given a vector $x$ and a doubly even code $D$ with a basis $\{u_1, u_2,\ldots, u_k\}$,
the function \verb+BFormArray+ returns an array $(B(x,u_i))_{i}$.
Given doubly even codes $C, D$ with respective bases $\{u_1, u_2,\ldots, u_k\}$
and $\{v_1, v_2,\ldots, v_l\}$, the function \verb+BFormMatrix+
returns a matrix $(B(u_i,v_i))_{i,j}$.
\begin{lstlisting}[firstline=2,name=prog]
******/
BFormArray:=function(x,D)
   kD:=Dimension(D);
   return [BForm(x,D.j) : j in [1..kD]];
end function;

BFormMatrix:=function(C,D)
   kC:=Dimension(C);
   kD:=Dimension(D);
   M:=Matrix(GF(2), kC, kD,
               [BFormArray(C.i, D) : i in [1..kC]]);
   return M;
end function;/******
\end{lstlisting}
Given a doubly even code $C$, the functions \verb+Cmeetrad+ and
\verb+CmeetRad+
return the subcode $C\cap \rad C$ and $C\cap \Rad C$ respectively,
applying Lemma~\ref{lem:RL}.
\begin{lstlisting}[firstline=2,name=prog]
******/
Cmeetrad:=function(C)
   D:=Dual(CstarC(C)) meet C;
   H:=VectorSpace(GF(2),Dimension(C));
   VD:=VectorSpace(D);
   g:=hom<VD->H|BFormMatrix(D,C)>;
   rad:=sub<D|Kernel(g)>;
   return rad;
end function;

CmeetRad:=function(C)
   rad:=Cmeetrad(C);
   k:=Dimension(rad);
   H:=VectorSpace(GF(2),1);
   VD:=VectorSpace(rad);
   g:=hom<VD->H| [[QForm(rad.i)]:i in [1..k]]>;
   Rad:=sub<rad|Kernel(g)>;
   return Rad;
end function;/******
\end{lstlisting}
\subsection*{Doubly even codes which contain each triply even radical}
The function \verb+outsideVectors+ returns a complete list of representatives
of cosets $(C\ast C)^\perp/((C\ast C)^\perp\cap C)$ up to the action of
$\Aut(C)$.
Given a doubly even code $C$, the function \verb+existsOutsideRad+
returns true if and only if $\Rad C\not\subset C$,
applying Lemma~\ref{lem:outrad} and Lemma~\ref{lem:outRad}.
\begin{lstlisting}[firstline=2,name=prog]
******/
outsideVectors:=function(C)
   U:=Generic(C);
   A:=AutomorphismGroup(C);
   P:=PermutationModule(A,GF(2));
   D:=Dual(CstarC(C));
   E:=C meet D;
   PD,e:=sub<P | VectorSpace(D)>;
   PDC:=sub<P | VectorSpace(E)>;
   M,p:=quo<PD | PDC>;
   G:=MatrixGroup(M);
   return {D!(o[1] @@ p @ e) : o in Orbits(G) | not 0 in o};
end function;

existsOutsideRad:=function(C)
   H:=VectorSpace(GF(2),Dimension(C));
   D:=Dual(CstarC(C)) meet C;
   VD:=VectorSpace(D);
   g:=hom<VD->H| BFormMatrix(VD,C)>;
   Im:=Image(g);
   rad := Kernel(g);
   b1:=exists(u){ i : i in [1..Dimension(rad)] | QForm(rad.i) ne 0};
   X:=outsideVectors(C);
   b2:=exists(v){x : x in X |
      imgx in Im and (b1 or QForm(x+imgx @@ g) eq 0)
      where imgx:= H!BFormArray(x,C)};
   return b2;
end function;/******
\end{lstlisting}
The record \verb+RF+ equips the following objects for
a doubly even code of length $24$.
\begin{lstlisting}[firstline=2,name=prog]
******/
RF:=recformat<
   C,    // the original code
   R,    // the triply even radical of C
   prd,  // the max dim of radical of supcode of codim = 1
   CR,   // the quotient space C/R
   p,    // the projection C -> C/R
   X,    // the array [ x in CR | Q(x) = 0 ]
   px,   // the projection V(X)->C/R
   CC,   // the triply even check code
   AutCR // Aut(C) meet Aut(R)
>;/******
\end{lstlisting}
Given a doubly even code $C$ and its triply even radical $R$,
the procedure \verb+profiles+ constructs
the quotient $C/R$, the projection $p: C\to C/R$,
the singular points $X$, the automorphism group $\Aut(C)\cap\Aut(R)$
and the triply even check code, and then
returns a record containing them.
\begin{lstlisting}[firstline=2,name=prog]
******/
profiles:=function(C, prd)
   s:=rec<RF | C:=C, prd:=prd>;
   s`R:=CmeetRad(C);
   s`CR,s`p:=VectorSpace(C)/VectorSpace(s`R);
   s`X:=[x:x in s`CR|QForm(x @@ s`p) eq 0];
   M:=Matrix(GF(2), #s`X, Dimension(s`CR), s`X);
   s`px:=hom<VectorSpace(GF(2),#s`X)->s`CR|M>;
   s`AutCR:=AutomorphismGroup(C)
             meet AutomorphismGroup(s`R);
   s`CC:=LinearCode(Kernel(s`px));
   return s;
end function;/******
\end{lstlisting}
The procedure \verb+constAllSubcodeContainsRad+ constructs the list of all
doubly even codes of length $24$ containing its triply even radical.
\begin{lstlisting}[firstline=2,name=prog]
******/
constAllSubcodeContainsRad:=function(maxcodes24)
   codes:=[[ profiles(D, 0) : D in maxcodes24]];
   print "=> Now, constructing all admissible doubly even codes of length 24...";
   for i in [1..9] do
      d:=12-i;
      ovcodes:=codes[#codes];
      reps:=&cat[[<C,Dimension(s`R)>:C in subcodes(s`C, s`R)]: s in ovcodes];
      reps:=uptoequivalenceDE(reps);
      reps:=[S : S in reps | not existsOutsideRad(S[1])];
      printf "=> Completed for dim=%3o, the number of codes=%4o.\n", d, #reps;
      Append(~codes, [profiles(D[1], D[2]) :D in reps]);
      if IsEmpty(reps) then
            break i;
      end if;
   end for;
   printf "=> This is the expected result : %o.\n",
           [#x:x in codes] eq [9,42,160,377,437,220,36,1, 0];
   return &cat(codes);
end function;/******
\end{lstlisting}
\subsection*{Identification of maximal triply even codes}
Given a triply even code, the function \verb+isMaximal+ returns true
if and only if the code is a maximal triply even code.
\begin{lstlisting}[firstline=2,name=prog]
******/
isMaximal:=function(C)
   D:=CodeComplement(Dual(CstarC(C)), C);
   t:=exists(u){x:x in D| x ne 0 and QForm(x) eq 0
      and forall(v){i:i in [1..Dimension(C)] |  BForm(x,C.i) eq 0}};
   return not t;
end function;/******
\end{lstlisting}
The procedure \verb+appendCode+ appends a new maximal
triply even code to the list of codes.
\begin{lstlisting}[firstline=2,name=prog]
******/
appendCode:=procedure(~codenum, ~maxcodes, reps, Ds, id)
   codenum:=codenum + #Ds;
   for D in Ds do
      if isMaximal(D) then
         Append(~maxcodes, D);
         invt:=<Dimension(D),NumberOfWords(D,8)>;
         id0:=Position(reps[2],invt);
         if id0 ne 0 and not IsEquivalent(D,reps[1][id0]) then
             id0 := 0;
         end if;
         printf
          "Found a MTE code = Rep.%2o of dim=%2o from DE code No.%o : %o.\n",
          id0, Dimension(D), id, reps[3][id0+1];
      end if;
   end for;
end procedure;/******
\end{lstlisting}

\subsection*{Triply even codes constructed from the combinations with $C_1=C_2$}
We enumerate all codes obtained from the method in Proposition~\ref{prop:D}
with $C_1=C_2$.

Given a doubly even code and its triply even radical,
the function \verb+constDoubleCosetsCC+ returns
the representatives of double cosets
\begin{equation}\label{eq:dcoset}
\cG_0(C,R)\backslash \cG_1(C,R)/\cG_0(C,R).
\end{equation}
\begin{lstlisting}[firstline=2,name=prog]
******/
constDoubleCosetsCC:=function(s)
   CRs:={@x:x in s`CR|x ne 0@};
   SCRs:=Sym(CRs);
   G0:=sub<SCRs|{[((x @@ s`p)^g) @ s`p:x in CRs]:
                g in Generators(s`AutCR)}>;
   GLCR:=sub<SCRs|{[x^g:x in CRs]:
                g in Generators(GL(s`CR))}>;
   G1:=Stabilizer(GLCR, {x : x in s`X | x ne 0});
   return DoubleCosetRepresentatives(G1, G0, G0);
end function;/******
\end{lstlisting}
Given a doubly even code $C$
and the double cosets (\ref{eq:dcoset}),
the function \verb+resultingCodesCC+ returns triply even codes
constructed from the code $C$ using the method in
Proposition~\ref{prop:D} with $C_1=C_2=C$.
\begin{lstlisting}[firstline=2,name=prog]
******/
resultingCodesCC:=function(s, dc)
   D:=DirectSum(s`R, s`R);
   k:=Dimension(s`CR);
   M1:=Matrix([s`CR.i @@ s`p : i in [1..k]]);
   codes:=[D+LinearCode(HorizontalJoin(M1, M2)) where
     M2:=Matrix([((s`CR.i)^g) @@ s`p : i in [1..k]])
            : g in dc];
   return codes;
end function;/******
\end{lstlisting}
The object \verb+partsDB+ is the set of doubly even codes of length $24$ containing
its triply even radical.
The function \verb+duplextype+ returns the list of all maximal triply even codes and
the number of triply even codes of length $48$ constructed
from \verb+partsDB+ with $C_1=C_2$.
\begin{lstlisting}[firstline=2,name=prog]
******/
duplextype:=function(partsDB, reps)
   maxcodes:=[];
   codenum:=0;
   excodenum:=0;
   for id in [1..#partsDB] do
      s:=partsDB[id];
      k:=Dimension(s`CR);
      if k eq 0 then
         if Dimension(s`R) eq s`prd then
            excodenum:=excodenum+1;
         else
            D:=DirectSum(s`R, s`R);
            appendCode(~codenum, ~maxcodes, reps, [D], id);
         end if;
      else
         doubleCosets:=constDoubleCosetsCC(s);
         if Dimension(s`R) eq s`prd then
            Remove(~doubleCosets, 1);
            excodenum:=excodenum+1;
         end if;
         if not IsEmpty(doubleCosets) then
            list:=resultingCodesCC(s, doubleCosets);
            appendCode(~codenum, ~maxcodes, reps, list, id);
         end if;
      end if;
   end for;
   return maxcodes, codenum, excodenum;
end function;/******
\end{lstlisting}
\subsection*{Triply even codes constructed from the combinations with $C_1\not\cong C_2$}

We enumerate all codes obtained from the method in Proposition~\ref{prop:D}
with $C_1\not\cong C_2$.

Given a pair of doubly even codes and an isomorphism
between their triply even check codes,
the function \verb+isometry+ returns an isometry
between them.
\begin{lstlisting}[firstline=2,name=prog]
******/
isometry:=function(s1, s2, g)
   CX:=Image(s1`px);
   BCX, p:=quo<s1`CR|CX>;
   bCR1:=Basis(CX) cat [x @@ p : x in Basis(BCX)];
   bCX2:=[bCR1[i] @@ s1`px @ g @ s2`px : i in [1..Dimension(CX)]];
   BbCX2, p:=quo<s2`CR|bCX2>;
   bCR2:=bCX2 cat [x @@ p : x in Basis(BbCX2)];
   return  hom<s1`CR->s2`CR | [bCR1[i]->bCR2[i] : i in [1..#bCR1]]>;
end function;/******
\end{lstlisting}
Given the object \verb+partsDB+,
which is the set of doubly even codes
of length $24$ containing its triply even radical,
the function \verb+isometricPairsC1C2+ returns the list of isometric
pairs of distinct doubly even codes
and an isometry between them.
\begin{lstlisting}[firstline=2,name=prog]
******/
isometricPairsC1C2:=function(ss)
   C1C2s:=&cat[[<i, j, isometry(ss[i],ss[j],g)>
           : j in [i+1..#ss]
             | Dimension(ss[i]`CR) eq Dimension(ss[j]`CR)
               and #ss[i]`X eq #ss[j]`X and isEq
               where isEq, g := IsEquivalent(ss[i]`CC, ss[j]`CC)]
         : i in [1..#ss]];
   printf "The number of hybrid pairs = 125: %o.\n", #C1C2s eq 125;
   return C1C2s;
end function;/******
\end{lstlisting}
Given a pair of doubly even codes and an isometry,
the function \verb+constDoubleCosets+ returns the double cosets
\begin{equation}\label{eq:dcoseth}
h^{-1}\cG_0(C_2,R_2)h\backslash\cG_1(C_1,R_1)/\cG_0(C_1,R_1).
\end{equation}
\begin{lstlisting}[firstline=2,name=prog]
******/
constDoubleCosetsC1C2:=function(s1, s2, h)
   CRs:={@ x : x in s1`CR  | x ne 0 @};
   SCRs:=Sym(CRs);
   G01:=sub<SCRs | {[((x @@ s1`p)^g) @ s1`p
           : x in CRs] : g in Generators(s1`AutCR)}>;
   G02:=sub<SCRs | {[((x @ h @@ s2`p)^g) @ s2`p @@ h
           : x in CRs] : g in Generators(s2`AutCR)}>;
   GLCR:=sub<SCRs | {[x^g : x in CRs] : g in Generators(GL(s1`CR))}>;
   G1:=Stabilizer(GLCR, {x : x in s1`X | x ne 0});
   return DoubleCosetRepresentatives(G1, G01, G02);
end function;/******
\end{lstlisting}
Given a pair of doubly even codes $C_1$ and $C_2$,
an isometry $h$ from $C_1/R_1$ to $C_2/R_2$
and the double cosets (\ref{eq:dcoseth}),
the function \verb+resultingCodesC1C2+ returns triply even codes
constructed from the pair of codes using the method
in Proposition~\ref{prop:D}.
\begin{lstlisting}[firstline=2,name=prog]
******/
resultingCodesC1C2:=function(s1, s2, h, dc)
   k:=Dimension(s1`CR);
   D:=DirectSum(s1`R, s2`R);
   M1:=Matrix([s1`CR.i @@ s1`p : i in [1..k]]);
   codes:=[D+LinearCode(HorizontalJoin(M1, M2)) where
     M2:=Matrix([((s1`CR.i)^g) @ h @@ s2`p : i in [1..k]])
                                          : g in dc];
   return codes;
end function;/******
\end{lstlisting}
Recall that the object \verb+partsDB+ is the set of doubly even codes
of length $24$ containing its triply even radical.
The function \verb+hybridtype+ returns the list of all maximal triply even codes and
number of triply even codes of length $48$ constructed
from \verb+partsDB+ with $C_1\not\cong C_2$.
\begin{lstlisting}[firstline=2,name=prog]
******/
hybridtype:=function(partsDB, reps)
   maxcodes:=[];
   codenum:=0;
   c1c2s:=isometricPairsC1C2(partsDB);
   for id in c1c2s do
      s1:=partsDB[id[1]];
      s2:=partsDB[id[2]];
      h:=id[3];
      k:=Dimension(s1`CR);
      if k eq 0 then
         D:=DirectSum(s1`R, s2`R);
         appendCode(~codenum, ~maxcodes, reps, [D], <id[1],id[2]>);
      else
         doubleCosets:=constDoubleCosetsC1C2(s1, s2, h);
         list:=resultingCodesC1C2(s1, s2, h, doubleCosets);
         appendCode(~codenum, ~maxcodes, reps, list, <id[1], id[2]>);
      end if;
   end for;
   return maxcodes, codenum;
end function;/******
\end{lstlisting}
\subsection*{Representative examples of maximal triply even codes}
We give $10$ maximal triply even codes
$\{\teD(C) \mid C\in\Delta\}$ and $\ttgc{10}$
of length $48$.
These codes are constructed by the functions
\verb+tildeD+ and \verb+TriangularGraphCode+.
\begin{lstlisting}[firstline=2,name=prog]
******/
tildeD := function(C)
   R := CmeetRad(C);
   return Juxtaposition(C,C)+DirectSum(R,R);
end function;

TriangularGraph:=function(v)
   X:=SetToIndexedSet(Subsets({1..v},2));
   return #X, Matrix(GF(2), #X, #X, [[#(x meet y):y in X] : x in X]);
end function;

TriangularGraphCode:=function(v)
   n, M:=TriangularGraph(v);
   r:=(-n) mod 8;
   return PadCode(LinearCode(M),r) + RepetitionCode(GF(2), n+r);
end function;/******
\end{lstlisting}
The object \verb+repMTECodes+ is the list of $10$ maximal triply even codes
equipped with their dimensions and the numbers of their codewords of
weight $8$.
\begin{lstlisting}[firstline=2,name=prog]
******/
repMTECodes1:=[ tildeD(C) : C in desd24 ] cat [TriangularGraphCode(10)];
repMTECodes2:=[<Dimension(C),NumberOfWords(C,8)> : C in repMTECodes1];
repMTECodes3:=["New!", "tD( g_{24} )", "tD( d_{24}^{+} )",
               "tD( d_{12}^{2+} )", "tD( (d_{10}e_7^2)^{+} )",
               "tD( d_8^{3+} )", "tD( d_6^{4+} )", "tD( d_4^{6+} )",
               "tD( d_{16}^{+}\oplus e_8 )", "tD( e_8^{\oplus3}\} )",
               "tT_{10}"];
dim_repMTECodes:={* Dimension(C) : C in repMTECodes1*};
printf "Representative codes are inequivalent each other: %o.\n",
         #repMTECodes1 eq #Seqset(repMTECodes2) and
         dim_repMTECodes eq {* 9^^1, 13^^7, 14^^1, 15^^1 *};
repMTECodes:=<repMTECodes1,repMTECodes2,repMTECodes3>;/******
\end{lstlisting}

\subsection*{Non existence of the other maximal triply even code}
In this subsection, we aim to ensure
that there does not exist any maximal
triply even code of length $48$
except for the representative examples in the previous
subsection up to equivalence.

First, we enumerate all doubly even codes of length $24$
which contain their triply even radicals.
\begin{lstlisting}[firstline=2,name=prog]
******/
partsDB:=constAllSubcodeContainsRad(desd24);
table:=[[Integers()!0: j in [1..13-k]]:k in [1..9]];
for k in [1..#partsDB] do
    i:=13-Dimension(partsDB[k]`C);
    j:=Dimension(partsDB[k]`R);
    table[i][j]+:=1;
end for;
printf "The number of admissible codes is same as expected: %o.\n",
table eq
[
   [   7,  1,  1,  0, 0, 0, 0, 0, 0, 0, 0, 0 ],
   [  33,  6,  3,  0, 0, 0, 0, 0, 0, 0, 0 ],
   [ 130, 19, 10,  1, 0, 0, 0, 0, 0, 0 ],
   [ 308, 40, 23,  5, 0, 1, 0, 0, 0 ],
   [ 363, 37, 25, 10, 1, 1, 0, 0 ],
   [ 180, 16, 10, 11, 2, 1, 0 ],
   [  27,  2,  0,  4, 2, 1 ],
   [   0,  0,  0,  0, 1 ],
   [   0,  0,  0,  0 ]
];/******
\end{lstlisting}
Second, we check the maximality of triply even codes
constructed from all the doubly even codes in duplicate.
\begin{lstlisting}[firstline=2,name=prog]
******/
duplex_max, duplex_num, exduplex_num
        :=duplextype(partsDB, repMTECodes);
printf "%3o maximal codes of duplex type found.\n",#duplex_max;
printf "This is the expected result: %o.\n",
      <#duplex_max, duplex_num, exduplex_num> eq <30,214,1268>;/******
\end{lstlisting}
Next, we check the maximality of triply even codes
constructed from all the pairs of distinct doubly even codes.
\begin{lstlisting}[firstline=2,name=prog]
******/
hybrid_max, hybrid_num:=hybridtype(partsDB, repMTECodes);
printf "%3o maximal codes of hybrid type found.\n",#hybrid_max;
printf "This is the expected result: %o.\n",
      <#hybrid_max, hybrid_num> eq <5,225>;/******
\end{lstlisting}

\subsection*{Result}
A classification of triply even codes of length $48$ has been completed
into the $10$ codes.
This calculation has been completed
in the total time: 650.240 seconds, the total memory usage: 534.91MB
under the environment using
``Intel{\tiny \textregistered} Core{\tiny \texttrademark} 2 Duo CPU T7500 @ 2.20GHz''.

\end{document}